\theoremstyle{plain}
\newtheorem{theorem}{Theorem}[section]
\newtheorem{lemma}[theorem]{Lemma}
\newtheorem{corollary}[theorem]{Corollary}
\newtheorem{proposition}[theorem]{Proposition}
\theoremstyle{definition}
\newtheorem*{definition}{Definition}
\theoremstyle{remark}
\newtheorem*{remark}{Remark}
\newtheorem*{remarks}{Remarks}
\let\@@pmod\pmod
\DeclareRobustCommand{\pmod}{\@ifstar\@pmods\@@pmod}
\def\@pmods#1{\mkern4mu({\operator@font mod}\mkern 6mu#1)}
\newcommand{\C}{\mathbb{C}}
\renewcommand{\H}{\mathbb{H}}
\newcommand{\Z}{\mathbb{Z}}
\newcommand{\Q}{\mathbb{Q}}
\newcommand{\N}{\mathbb{N}}
\newcommand{\R}{\mathbb{R}}
\newcommand{\slz}{{\text {\rm SL}}_2(\mathbb{Z})}
\newcommand{\e}{\mathfrak{e}}
\newcommand{\re}{\textnormal{Re}}
\newcommand{\vt}[1]{\left\lvert #1 \right\rvert}
\newcommand{\sgn}{\operatorname{sgn}}
\DeclareMathOperator{\Gr}{Gr}
\DeclareMathOperator{\reg}{reg}
\DeclareMathOperator{\tr}{tr}
\newcommand{\CT}{\mathrm{CT}}
\newcommand{\sfr}{\mathfrak{s}}
\newcommand{\Fc}{\mathcal{F}}
\newcommand{\Gc}{\mathcal{G}}
\renewcommand{\Mc}{\mathcal{M}}
\newcommand{\Hc}{\mathcal{H}}
\newcommand{\Hs}{\mathscr{H}}
\newcommand{\Qc}{\mathcal{Q}}
\newcommand{\Fb}{\mathbb{F}}
\title[Eichler--Selberg type relations for negative weights]{Local Maa{\ss} forms and Eichler--Selberg type relations for negative weight vector-valued mock modular forms}
\author{Joshua Males}
\address{450 Machray Hall, Department of Mathematics, University of Manitoba, Winnipeg,
	Canada}
\email{joshua.males@umanitoba.ca}
\author{Andreas Mono}
\address{Department of Mathematics and Computer Science, Division of Mathematics, University of Cologne, Weyertal 86-90, 50931 Cologne, Germany}
\email{amono@math.uni-koeln.de}
\begin{document}

\subjclass[2020]{11F27 (Primary); 11F37 (Secondary)}

\keywords{Higher Siegel theta lift, Eichler--Selberg type relations, local Maa{\ss} forms, vector-valued mock modular forms}

\thanks{The research conducted by the first author for this paper is supported by the Pacific Institute for the Mathematical Sciences (PIMS). The research and findings may not reflect those of the Institute. The second author was supported by the CRC/TRR 191 ``Symplectic Structures in Geometry, Algebra and Dynamics'', funded by the DFG (project number 281071066).}

\maketitle

\section*{Modifications to the published version below}

\begin{enumerate}
\item Throughout the paper, we add the assumption that our homogeneous polynomial $p$ inside the Siegel theta function is equal to $1$. Otherwise, the Siegel theta function might not split into a positive definite and a negative definite part in general. In particular, one has to add additional assumptions on the isometry $\psi$ as well as on the polynomial $p$ to obtain such a splitting, see \cite{zemel2}*{Lemma 2.2} and the discussion preceeding it. Furthermore, finding a preimage of $\Theta_P$ under the shadow operator $\xi$ might not be guaranteed for non-constant polynomials $p$, and our Proposition 4.1 is wrong if $p \neq 1$ since the Laplacian depends on the given polynomial, see \cite{zemel1}*{Proposition 2.5} for the correct version.

\item In Theorem 1.2, we need to specialize the signature of the lattice $L$ to $(2,1)$ instead of $(2,s)$. This is necessary, because the nature of the singularities of the lift is different in higher dimensions, see the recent preprint \cite{adks}. In particular, the first condition in our definition of a local Maa{\ss} form on page $389$ simplifies to the usual scalar-valued modularity condition, see Bringmann, Kane and Viazovska \cite{BKV}*{Subsection 2.4} as well. In general, the Siegel theta function is invariant under the discriminant kernel of $\mathrm{O}(L)$ as a function of $Z \in \mathrm{Gr}(L)$, see \cite{bruinier2004borcherds}*{p.\ 40}. In the case of signature $(2,1)$, we have $\mathrm{Gr}(L) \cong \mathbb{H}$, and choosing a particular lattice of that signature leads to further identifications, which in turn yield the framework of \cite{BKV}. This is described in Section $5$ of our paper below.
\end{enumerate}

\section*{Acknowledgement}
The authors would like to thank Paul Kiefer for pointing out the aforementioned errors and for many helpful conversations. Moreover, we thank the referee for a helpful comment.

\newpage

\begin{center}
\large \uppercase{\textbf{Published version}}
\end{center}

\smallskip

\begin{abstract}
By comparing two different evaluations of a modified (\`{a} la Borcherds) higher Siegel theta lift on even lattices of signature $(r,s)$, we prove Eichler--Selberg type relations for a wide class of negative weight vector-valued mock modular forms. In doing so, we detail several properties of the lift, as well as showing that it produces an infinite family of local (and locally harmonic) Maa{\ss} forms on Grassmanians in certain signatures.
\end{abstract}

\bigskip

\section{Introduction}

Theta lifts have a storied history in the literature, receiving a vast amount of attention in the past few decades with applications throughout mathematics. In this paper, we are concerned with generalizations of the Siegel theta lift originally studied by Borcherds in the celebrated paper \cite{bor}. The classical Siegel lift maps half-integral weight modular forms to those of integral weight, and has seen a wide number of important applications. For example, in arithmetic geometry \cites{BrZe,ES}, deep results in number theory \cite{BO2}, fundamental work of Bruinier and Funke \cite{bruinierfunke2004}, among many others.

More recently, Bruinier and Schwagenscheidt \cite{BS} investigated the Siegel theta lift on Lorentzian lattices (that is, even lattices of signature $(1,n)$), and in doing so provided a construction of recurrence relations for mock modular forms of weight $\frac{3}{2}$, as well as commenting as to how one could provide a similar structure for those of weight $\frac{1}{2}$, thereby including Ramanujan's classical mock theta functions.

In the last few years, several authors have also considered so-called ``higher'' Siegel theta lifts of the shape ($k \coloneqq \frac{1-n}{2}, j \in \N_0$)
\begin{align*}
\int_{\Fc}^{\reg} \left\langle R_{k-2j}^j f, \overline{\Theta_L(\tau,z)} \right\rangle v^k d\mu(\tau)
\end{align*}
where $R_{\kappa}^n \coloneqq R_{n-2} \circ R_{n-4} \circ \dots \circ R_{\kappa}$ is an iterated version of the Maa{\ss}  raising operator $R_\kappa \coloneqq 2i\frac{\partial}{\partial\tau} + \frac{\kappa}{v}$, $f$ is weight $k-2j$ harmonic Maa{\ss} form, and $\Theta_L$ is the standard Siegel theta function associated to an even lattice $L$ of signature $(1,n)$. Here and throughout, $\tau = u+iv \in \H$ and $z \in \Gr(L)$, the Grassmanian of $L$. Furthermore, $\langle \cdot, \cdot \rangle$ denotes the natural bilinear pairing. For example, they were considered by Bruinier and Ono (for $k=0, j=1$) in the influential work \cite{BO}, by Bruinier, Ehlen, and Yang in in the breakthrough paper \cite{bruinier2020greens} in relation to the Gross-Zagier conjecture, and by Alfes-Neumann, Bringmann, Schwagenscheidt and the first author in \cite{ANBMS} for $n=2$ and generic $j$. 

 In \cite{Mer}, Mertens investigated the classical Hurwitz class numbers, denoted by $H(n)$ for $n \in \N$. Using techniques in (scalar-valued) mock modular forms, he gave an infinite family of class number relations for odd $n$, two of which are
\begin{align} \label{Mertens rel}
\sum_{s \in \Z} H(n-s^2) + \lambda_1(n) = \frac{1}{3} \sigma_1(n), \qquad \sum_{s\in \Z}(4s^2 -n) H(n-s^2) + \lambda_3(n) = 0,
\end{align}
where $\lambda_k(n) = \frac{1}{2} \sum_{d \mid n} \min(d,\frac{n}{d})^k$ and $\sigma_k$ is the usual $k$-th power divisor function. Because of their close similarity to the classical formula of Kronecker \cite{Kro} and Hurwitz \cites{Hur1,Hur2}
\begin{align*}
\sum_{s \in \Z} H(n-s^2) - 2\lambda_1(n) = 2\sigma_1(n),
\end{align*}
and those arising from the Eichler--Selberg trace formula, Mertens referred to the relationships \eqref{Mertens rel} as \textit{Eichler--Selberg type relations}. More generally, let $[\cdot,\cdot]_\nu$ denote the $\nu$-th Rankin--Cohen bracket (see Section \ref{Sec: prelims}). In general, the Rankin-Cohen bracket $[f,g]$ is a mixed mock modular form of degree $\nu$. It is of inherent interest to determine its natural completion, say $\Lambda$, to a holomorphic modular form. Then following Mertens \cite{Mer1}, we say that a (mock-) modular form $f$ satisfies an Eichler--Selberg type relation if there exists some holomorphic modular form $g$ and some form $\Lambda$ such that
\begin{align*}
\left[f,g\right]_\nu +\Lambda
\end{align*}
is a holomorphic modular form. In the influential paper \cite{Mer1}, Mertens showed the beautiful result that all mock-modular forms of weight $\frac{3}{2}$ with holomorphic shadow satisfy Eichler--Selberg type relations, using the powerful theory of holomorphic projection and the Serre-Stark theorem stating that unary theta series form a basis for the spaces of holomorphic modular forms of the dual weight $\frac{1}{2}$.\footnote{Mertens also provides results for mock theta functions in weight $\frac{1}{2}$, but since there is no analogue of Serre-Stark in the dual weight $\frac{3}{2}$ this is a real restriction.} In particular, Mertens explicitly describes the form $\Lambda$ which completes the Rankin--Cohen brackets.

Following previous examples, to demonstrate the statement, let $\Hc$ denote the generating function of Hurwitz class numbers, let $\vartheta = \sum_{n \in \Z} q^{n^2}$, where $\tau \in \H$, and $q^n = \mathrm{e}^{2\pi i n \tau}$ throughout. Then Mertens' results show that \cite{Mer1}*{pp. 377} 
\begin{align*}
\left[\Hc,\vartheta \right]_\nu + 2^{-2\nu-1}\binom{2\nu}{\nu} \left( \sum_{r\geq 1} 2\sum_{\substack{m^2-n^2=r\\ m,n\geq 1}} (m-n)^{2\nu-1}\ q^r + \sum_{r\geq 1} r^{2\nu+1}q^r\right)
\end{align*}
is a holomorphic modular form of weight $2\nu+2$ for all $\nu \geq 1$, and a quasimodular form of weight $2$ if $\nu=0$.

In \cite{Ma}, the first author combined techniques of \cites{ANBMS,BS} during a further investigation of the higher Siegel lift on Lorentzian lattices. This lift was shown to be central in producing certain Eichler--Selberg type relations in the vector-valued case, providing an analogue of the scalar-valued weight $\frac{3}{2}$ case of Mertens. We remark that the shape of the form $\Lambda$ in the case of signature $(1,1)$ is very close to that of Mertens (see \cite{Ma}*{Theorem 1.1}), though we do not recall it here to save on complicated definitions in the introduction. 

In the current paper, we develop the theory for even generic signature $(r,s)$ lattices $L$ and more general modified Siegel theta functions as in Borcherds \cite{bor}, and consider the lift
\begin{align*}
\Psi^{\reg}_j \left(f, z\right)
\coloneqq
\int_{\Fc}^{\reg} \left\langle R_{k-2j}^{j}(f)(\tau) , \overline{\Theta_L(\tau,\psi,p)} \right\rangle v^{k} d\mu(\tau),
\end{align*}
where $\Theta_L$ is a modified Siegel theta function as in Borcherds \cite{bor}, essentially obtained by including a certain polynomial $p$ in the summand of the usual vector-valued Siegel theta function. We require $p$ to be homogenous and spherical of degree $d^+ \in \N_0$ in the first $r$ variables, and $d^- \in \N_0$ in the last $s$ variables (see Section \ref{Sec: theta functions} for precise definitions). Here, $\psi$ is an isometry which in turn defines $z$ - see Section \ref{Sec: theta functions}. Modifying the theta function in this way preserves modular properties of $\Theta_L$, while allowing us to obtain different weights of output functions. Furthermore, since the case $j=0$ is well-understood in the literature, we assume throughout that $j>0$. We remark that the signature $(1,2)$ with $j=0$ case has also been studied in \cites{Craw,CF}.

In particular, we evaluate the higher lift in the now-standard ways of unfolding in Theorem \ref{cor: theta lift as 2F1}, as well as recognizing it as a constant term in the Fourier expansion of the Rankin--Cohen bracket of a holomorphic modular form and a theta function (up to a boundary integral that vanishes for a certain class of input functions) in Theorem \ref{Theorem: theta lift at CM points}. For the second of these theorems, we use that at special points $w$, one may define positve- and negative-definite sublattices $P \coloneqq L \cap w$ and $N \coloneqq L \cap w^\perp$. In the simplest case, which we assume for the introduction, we have that $L = P \oplus N$. Then the theta series splits as $\Theta_{L} = \Theta_{P} \otimes \Theta_{N}$, where $\Theta_P$ is a positive definite theta series, and $\Theta_N$ a negative definite one. Then we let $\Gc_P^+$ be the holomorphic part of a preimage of $\Theta_P$ under $\xi_\kappa \coloneqq 2iv^\kappa \overline{\frac{\partial}{\partial {\overline{\tau}}}}$. For the sake of simplicity, we assume that $\Gc_P^+ +g$ in the statement of Theorem \ref{Thm: main} is bounded at $i\infty$ in the introduction; we overcome this assumption in Theorem \ref{Thm: ES} and offer a precise relation there.
Following the ideas of \cite{Ma}, by comparing these two evaluations of our lift and invoking Serre duality, we obtain the following theorem.

\begin{theorem}\label{Thm: main}
Let $L$ be an even lattice of signature $(r,s)$, with associated Weil representation $\rho_L$. Let $g$ be any holomorphic vector-valued modular form of weight $2-\left(\frac{r}{2}+d^+\right)$ for $\rho_L$. Suppose that $\Gc_P^+ +g$ is bounded at $i\infty$. Then $\Gc_P^+ + g$ satisfies an explicit Eichler--Selberg type relation. In particular, the form $\Lambda$ is explicitly determined.
\end{theorem}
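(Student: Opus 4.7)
The plan is to follow the template established in \cite{Ma}: evaluate the regularized higher Siegel theta lift $\Psi^{\reg}_j(f,z)$ in two independent ways at a point $z \in \Gr(L)$ for which $L = P \oplus N$ and $\Theta_L = \Theta_P \otimes \Theta_N$, match the two evaluations, and then pass from the resulting coefficient-wise identity to a statement about genuine modular forms by invoking Serre duality. The holomorphic modular form $g$ enters only at the final stage, as the ambiguity in choosing a preimage of $\Theta_P$ under $\xi_\kappa$ modulo holomorphic forms.

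First I would invoke Theorem \ref{cor: theta lift as 2F1} to express $\Psi^{\reg}_j(f,z)$, via unfolding, as an explicit lattice sum against the Fourier coefficients of $f$, with weights given by Gauss hypergeometric specializations at $z$. Then I would apply Theorem \ref{Theorem: theta lift at CM points} to rewrite the same lift as the constant term in the Fourier expansion of $\langle \left[\mathcal{G}_P^+, \Theta_N\right]_\nu, \overline{f}\rangle$ (with $\nu$ determined by $j$, $d^+$, and $d^-$), modulo a boundary integral which vanishes precisely because $\mathcal{G}_P^+ + g$ is bounded at $i\infty$. Equating the two evaluations and absorbing $[g, \Theta_N]_\nu$, which is automatically holomorphic modular of the correct weight, yields an identity of the form
\[
\CT\!\left(\left\langle \left[\mathcal{G}_P^+ + g, \Theta_N\right]_\nu + \Lambda,\, \overline{f}\right\rangle\right) = 0
\]
for every admissible input $f$, where $\Lambda$ is the explicit generating series read off from the unfolding side.

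Invoking Serre duality in the form of the non-degenerate constant-term pairing between weakly holomorphic modular forms of the input weight and holomorphic modular forms of the dual weight then forces $\left[\mathcal{G}_P^+ + g, \Theta_N\right]_\nu + \Lambda$ to be a genuine holomorphic vector-valued modular form for $\rho_L$, which is the asserted Eichler--Selberg type relation, and the unfolding evaluation supplies $\Lambda$ in closed form. The principal obstacle is the combinatorial bookkeeping required to identify $\Lambda$ cleanly: one must simultaneously track the iterated raising operator $R_{k-2j}^{j}$, the spherical polynomial of bidegree $(d^+, d^-)$, and the Rankin--Cohen coefficients, and then exploit the splitting $\Theta_L = \Theta_P \otimes \Theta_N$ together with the induced decomposition of the Weil representation so that the two evaluations match symbol-by-symbol. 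A secondary technicality is the precise verification that the boundedness hypothesis on $\mathcal{G}_P^+ + g$ kills the boundary integral of Theorem \ref{Theorem: theta lift at CM points}; once this is reinstated, one should recover the refined Theorem \ref{Thm: ES} with explicit boundary contributions in place of the boundedness assumption.
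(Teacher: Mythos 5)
Your overall architecture is exactly the paper's: evaluate $\Psi^{\reg}_j$ once by unfolding (Corollary \ref{cor: theta lift as 2F1}) and once as a constant term of a Rankin--Cohen bracket against $f$ (Theorem \ref{Theorem: theta lift at CM points}), subtract, and apply Serre duality (Proposition \ref{Prop: Serre}) to promote the resulting vanishing of constant-term pairings to modularity of $[\Gc_P^+ + g,\Theta_{N^-}]_j + \Lambda$. However, you have misassigned the role of the boundedness hypothesis. The leftover integral in Theorem \ref{Theorem: theta lift at CM points} is
\begin{align*}
\int_{\Fc}^{\reg} \left\langle L_{k-2j}\left(f_{P\oplus N}\right)(\tau), \left[\Gc_P^+(\tau),\Theta_{N^-}(\tau)\right]_j \right\rangle v^{-2}\, d\tau ,
\end{align*}
and it vanishes because the \emph{test input} $f$ is chosen weakly holomorphic, so $L_{k-2j}f = 0$; it has nothing to do with whether $\Gc_P^+ + g$ is bounded at $i\infty$. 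The genuine boundary terms arising from Stokes' theorem vanish for general reasons already inside the proof of Theorem \ref{Theorem: theta lift at CM points}.

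Where the boundedness of $\Gc_P^+ + g$ actually enters is the Serre duality step: Proposition \ref{Prop: Serre} is stated only for candidate forms bounded at $i\infty$, and one needs the constant term $\CT\bigl(\langle f_{P\oplus N}, [\,\cdot\,,\Theta_{N^-}]_j\rangle\bigr)$ to involve only the non-positive-index coefficients of $f$ so that it matches the pairing $\sum_{h,n\ge 0} c(h,n)c_f(h,-n)$ in that proposition. In the refined Theorem \ref{Thm: ES} this is arranged by replacing $\Gc_P^+$ with $\mathscr{G}_P^+$, i.e.\ subtracting the Knopp--Mason Poincar\'e series $\Fb_{\mu,n-1,2j+2-k}$ that carry its principal part (a correction your proposal omits); in Theorem \ref{Thm: main} it is exactly the stated hypothesis on $\Gc_P^+ + g$. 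With that reassignment of roles your argument goes through and coincides with the paper's proof; also note that in the paper the Rankin--Cohen index is simply $\nu = j$, not a quantity depending on $d^{\pm}$.
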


The concept of so-called locally harmonic Maa{\ss} forms was introduced by Bringmann, Kane, and Kohnen in \cite{BKK}. These are functions that behave like classical harmonic Maa{\ss} forms, except for an exceptional set of density zero, where they have jump singularities. Since their inception, locally harmonic Maa{\ss} forms have seen applications throughout number theory, for example in relation to central values of $L$-functions of elliptic curves \cite{Ehl}, as well as traces of cycle integrals and periods of meromorphic modular forms \cites{ANBMS,LoeSch} among many others. Examples of such locally harmonic Maa{\ss} forms are usually achieved in the literature via similar theta lift machinery to that studied here. In addition to the direction of Theorem \ref{Thm: main}, we also discuss the action of the Laplace--Beltrami operator on the lift $\Psi^{\reg}_j$ in Theorem \ref{Thm: eigenval}. In doing so, we prove the following theorem, thereby providing an infinite family of local Maa{\ss} forms (and locally harmonic Maa{\ss} forms) in signatures $(2,s)$. To state the result, we let $F_{m,k-2j,\sfr}$ be a Maa{\ss}-Poincar\'{e} series as defined in Section \ref{Sec: MP}.

\begin{theorem}\label{Thm: intro lhmf}
Let $L$ be an even isotropic lattice of signature $(2,s)$. Then the lift $ \Psi^{\reg}_j(F_{m,k-2j,\sfr},z)$ is a local Maa{\ss} form on $\Gr(L)$ with eigenvalue $(\sfr-\frac{k}{2})(1-\sfr-\frac{k}{2})$ under the Laplace-Beltrami operator.
\end{theorem}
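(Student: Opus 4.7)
The plan is to establish the three defining properties of a local Maa{\ss} form for $\Psi^{\reg}_j(F_{m,k-2j,\sfr},z)$: real-analyticity off a codimension-one locus, the prescribed eigenvalue equation under $\Delta_{\Gr(L)}$ on the complement of that locus, and finite jump discontinuities across it. The eigenvalue is essentially delivered by Theorem \ref{Thm: eigenval}. Indeed, $F_{m,k-2j,\sfr}$ is normalized as an eigenfunction of the weight $k-2j$ hyperbolic Laplacian $\Delta_{k-2j}$ with eigenvalue $(\sfr-\frac{k}{2})(1-\sfr-\frac{k}{2})$, and the raising operator shifts the weight while preserving the spectral data, so $R_{k-2j}^{j}F_{m,k-2j,\sfr}$ is again an eigenfunction for the same eigenvalue in the shifted weight. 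In signature $(2,s)$ there is a classical interchange identity between $\Delta_{\Gr(L)}$ acting on $\Theta_L(\tau,\psi,p)$ and a weighted hyperbolic Laplacian in $\tau$; combined with integration by parts over $\Fc$, this transfers the Grassmannian Laplacian onto the input and yields the eigenvalue equation wherever the integrand is sufficiently regular.

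For real-analyticity and jump behavior I would use the unfolded form given by Theorem \ref{cor: theta lift as 2F1}, which expresses $\Psi^{\reg}_j(F_{m,k-2j,\sfr},z)$ as a sum indexed by lattice vectors $\lambda \in L'$ of a specific norm, with each summand a Gauss hypergeometric function whose argument is controlled by the split $\lambda = \lambda_z + \lambda_{z^{\perp}}$ at $z \in \Gr(L)$. Away from the Heegner locus, i.e.\ the union of the walls $\lambda^{\perp}$ over the finitely many $\lambda$ of the relevant norm, these arguments avoid the branch cut of the hypergeometric function, so each summand is real-analytic; local finiteness of the sum then yields real-analyticity of the lift and justifies the integration by parts used above. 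Across a generic wall $\lambda^{\perp}$, only finitely many summands become singular simultaneously, and the jump is read off from the explicit boundary-value comparison of the two analytic continuations of the hypergeometric function across its branch cut.

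The main obstacle will be the jump computation: controlling the local finiteness of contributing $\lambda$ and simultaneously tracking how the spherical polynomial $p$ of bi-degree $(d^+,d^-)$ and the iterated raising $R_{k-2j}^{j}$ modify the hypergeometric arguments. Unlike the signature $(1,n)$ case, the walls $\lambda^{\perp}$ now sit inside a Hermitian symmetric space, so one must verify that the jumps glue into a genuine local Maa{\ss} form rather than producing higher-order distributional singularities. The parenthetical ``locally harmonic'' examples then correspond to the subfamily of parameters where $(\sfr-\frac{k}{2})(1-\sfr-\frac{k}{2})$ vanishes, so that the eigenvalue equation collapses to harmonicity on the complement of the singular locus.
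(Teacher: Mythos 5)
Your treatment of the eigenvalue is exactly the paper's: Theorem \ref{Thm: eigenval} is proved by moving $\Omega$ under the integral, invoking the identity $\Omega\,\Theta_L v^{\ell/2}=-\tfrac12\Delta_k\Theta_L v^{\ell/2}$, transferring $\Delta_k$ onto the input by adjointness of $R$ and $L$ (Stokes), and using that $R_{k-2j}^j$ sends $F_{m,k-2j,\sfr}$ to a multiple of $F_{m,k,\sfr}$, whose $\Delta_k$-eigenvalue is $(\sfr-\frac{k}{2})(1-\sfr-\frac{k}{2})$. (Be careful with your phrase ``the same eigenvalue in the shifted weight'': raising preserves the spectral parameter $\sfr$, not the numerical eigenvalue, which changes from the weight-$(k-2j)$ value to the weight-$k$ value; only the latter is the one quoted in the theorem.) Where you diverge is in the remaining conditions: the paper gets real-analyticity off the singular locus $H(m)$ for free from elliptic regularity of $\Omega$, and disposes of the jump, invariance, and growth conditions by citing the proof of Bringmann--Kane--Viazovska \emph{mutatis mutandis}, whereas you propose to extract everything from the unfolded hypergeometric series of Theorem \ref{cor: theta lift as 2F1}. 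That route can work, but note one slip: in signature $(2,s)$ there are \emph{infinitely} many $\lambda\in L'$ with $Q(\lambda)=-m$, so $H(m)$ is an infinite union of walls; what saves you is that only finitely many walls meet any compact subset of $\Gr(L)$, and convergence of the series (for $\re(\sfr)$ large) must be argued separately rather than by ``local finiteness of the sum.'' The elliptic-regularity argument sidesteps both issues and is the cleaner way to get analyticity; your explicit branch-cut analysis is genuinely needed only for the averaging condition (iii), which the paper itself does not carry out but delegates to the literature.
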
 

We provide an example of an input function to our lift. To this end, we specialize our setting to signature $(1,2)$, in which case vector-valued modular forms can be identified with the usual scalar-valued framework on the complex upper half plane, and in particular $\Gr(L) \cong \H$. (We explain the required choices in Section \ref{Sec: CES}.) In $1975$, Cohen \cite{coh75} defined the generalized class numbers
\begin{align*}
H(\ell-1,|D|) \coloneqq \begin{cases}
0 & \text{ if } D \neq 0,1 \pmod*{4}, \\
\zeta(3-2\ell) & \text{ if } D = 0, \\
L\left(2-\ell,\left(\frac{D_0}{\cdot}\right)\right) \sum_{d \mid j} \mu(d)\left(\frac{D_0}{d}\right)d^{\ell-2}\sigma_{2\ell-3}\left(\frac{j}{d}\right) & \text{ else},
\end{cases}
\end{align*}
where $D = D_0 j^2$, as well as their generating functions
\begin{align*}
\Hc_{\ell}(\tau) &\coloneqq \sum_{n \geq 0} H(\ell,n)q^n, \qquad \ell \in \N\setminus\{1\}.
\end{align*}
Here, $\zeta$ refers to the Riemann zeta function, $L(s,\chi)$ to the Dirichlet $L$-function twisted by a Dirichlet character $\chi$, and $\mu$ is the Möbius function. The functions $\Hc_{\ell}$ are known as Cohen--Eisenstein series today, and can be viewed as half integral weight analogues of the classical integral weight Eisenstein series. Note that the numbers $H(2,n)$ are precisely the Hurwitz class numbers introduced above, and $\Hc_2 = \Hc$. Cohen proved that $\Hc_{\ell} \in M_{\ell-\frac{1}{2}}(\Gamma_0(4))$, the space of scalar-valued modular forms of weight $\frac{1}{2}$ on the usual congruence subgroup $\Gamma_0(4)$, and the coefficients satisfy Kohnen's plus space condition by definition. (See \cite{thebook}*{eq.\ (2.13) to (2.15), Corollary 2.25} for more details on this.) 

However, evaluating our lift requires negative weight, and a non-constant principal part of the input function. To overcome both obstructions, we let 
\begin{align*}
f_{-2\ell,N}(\tau) = q^{-N} + \sum_{n > m} c_{-2\ell}(N,n) q^n, \quad N \geq -m, \quad m \coloneqq \begin{cases}
\lfloor \frac{-2\ell}{12}\rfloor-1 & \text{if } -2\ell \equiv 2\pmod*{12}, \\
\lfloor \frac{-2\ell}{12}\rfloor & \text{else},
\end{cases}
\end{align*}
be the unique weakly holomorphic modular form of weight $-2\ell$ for $ \slz$ with such a Fourier expansion, an explicit description of $f_{-2\ell,N}$ was given by Duke, Jenkins \cite{duje}, and by Duke, Imamo\={g}lu, T\'{o}th \cite{duimto10}*{Theorem 1}. Our machinery now enables us to obtain Eichler--Selberg type relations for the weakly holomorphic function $f_{-2\ell,N}(\tau)\Hc_{\ell}(\tau)$ along the lines of \cite{coh75}*{Section $6$}, as well as the following variant of Theorem \ref{Thm: intro lhmf}.

\begin{theorem} \label{Cor: intro lhmf}
The lift $\Psi^{\reg}_j\left(f_{-2\ell,N}\Hc_{\ell},z\right)$ is a local Maa{\ss} form on $\H$ for every $j \in \N$, $\ell \in \N\setminus\{1\}$, and $-m \leq N \in \N$ with exceptional set given by the net of Heegner geodesics
\begin{align*}
\bigcup_{D = 1}^N \left\{z = x+iy \in \H \colon \exists a,b,c \in \Z, \ b^2-4ac=D, \ a\vt{z}^2+bx+c = 0\right\}.
\end{align*}
\end{theorem}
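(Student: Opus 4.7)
The strategy is to reduce Theorem \ref{Cor: intro lhmf} to Theorem \ref{Thm: intro lhmf} via a principal-part decomposition of the input into Maa{\ss}--Poincar\'{e} series, combined with linearity of the lift $\Psi^{\reg}_j$.

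First, I would verify that in the signature $(1,2)$ setup of Section \ref{Sec: CES}, the function $f_{-2\ell,N}\Hc_\ell$ is a weakly holomorphic modular form of weight $-\ell - \tfrac{1}{2}$ on $\Gamma_0(4)$ lying in Kohnen's plus space (since $\Hc_\ell$ lies there and $f_{-2\ell,N}$ is a modular form on $\slz$), and that under the standard vector-valued/scalar-valued correspondence it furnishes a valid input to $\Psi^{\reg}_j$ of weight $k - 2j$; the degrees $d^+, d^-$ of the spherical polynomial $p$ are chosen accordingly for the given $j$ and $\ell$.

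Next, I would decompose the input. Writing $f_{-2\ell,N} = q^{-N} + O(q^{m+1})$ with $m < 0$, and $\Hc_\ell = \zeta(3-2\ell) + \sum_{n \geq 1} H(\ell,n) q^n$, one obtains the principal part as
\begin{align*}
f_{-2\ell,N}\Hc_\ell = \sum_{n=1}^{N} a(-n)\, q^{-n} + O(1),
\end{align*}
with explicitly computable $a(-n)$. Since the weight $-\ell - \tfrac{1}{2}$ is strictly negative, the space of holomorphic plus-space forms of this weight is trivial, so $f_{-2\ell,N}\Hc_\ell$ is uniquely determined by its principal part. Matching principal parts against the family $\{F_{n,k-2j,\sfr}\}_{n \geq 1}$ from Section \ref{Sec: MP} yields the decomposition
\begin{align*}
f_{-2\ell,N}\Hc_\ell = \sum_{n=1}^{N} a(-n)\, F_{n,k-2j,\sfr}
\end{align*}
(up to the normalization of the Poincar\'{e} series). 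Applying $\Psi^{\reg}_j$ term by term and invoking Theorem \ref{Thm: intro lhmf}, the lift $\Psi^{\reg}_j(f_{-2\ell,N}\Hc_\ell, z)$ is a finite $\C$-linear combination of local Maa{\ss} forms on $\H$, hence itself a local Maa{\ss} form.

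It remains to identify the exceptional set. Each individual lift $\Psi^{\reg}_j(F_{n,k-2j,\sfr}, z)$ develops its jump discontinuities precisely along those $z \in \H$ for which some lattice vector of norm $n$ becomes orthogonal to the positive line determined by $z$. Under the identification $\Gr(L) \cong \H$, such vectors correspond to integral indefinite binary quadratic forms $(a,b,c)$ with $b^2 - 4ac = n$, and the orthogonality condition translates to the geodesic equation $a\vt{z}^2 + bx + c = 0$. Taking the union over $n = 1, \dots, N$ yields the claimed exceptional set. The main obstacle is pinpointing this singular locus for each Poincar\'{e} series lift by tracking which terms in the theta expansion are responsible for the non-smoothness; the weight-matching via $p$ and the Poincar\'{e}-series decomposition itself are more mechanical given the results already developed.
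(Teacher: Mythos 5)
Your overall skeleton is close to the paper's: the paper also reduces to Maa{\ss}--Poincar\'{e} series (this is exactly Lemma \ref{lem:fdecomp}, already built into Corollary \ref{cor: theta lift as 2F1}), and your observation that $f_{-2\ell,N}\Hc_\ell$ is weakly holomorphic of negative weight with cuspidal shadow, hence determined by its principal part, is correct and is the paper's starting point. However, there are two genuine gaps.

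First, you cannot invoke Theorem \ref{Thm: intro lhmf} as stated: it requires an even \emph{isotropic lattice of signature $(2,s)$}, and the entire Laplace--Beltrami computation of Section \ref{Sec: LB} (in particular the cited differential equation for the theta kernel on $\H\times\H_\ell$) is carried out in that setting. The Cohen--Eisenstein example lives in signature $(1,2)$, where $\Gr(L)\cong\H$ is the Grassmannian of positive \emph{lines}. The paper therefore does not deduce Theorem \ref{Cor: intro lhmf} from Theorem \ref{Thm: intro lhmf}; it works directly from the explicit series expansion of Corollary \ref{cor: theta lift as 2F1} and only recovers the eigenvalue $\left(\sfr-\frac{k}{2}\right)\left(1-\sfr-\frac{k}{2}\right)=j\left(j-\ell-\frac{3}{2}\right)$ at the end. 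You would need at least to supply the signature-$(1,2)$ analogue of the eigenvalue statement (or argue real-analyticity off the singular set by other means).

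Second, and more substantively, the identification of the exceptional set is asserted rather than proved. Saying that each $\Psi^{\reg}_j(F_{n,k-2j,\sfr},z)$ has its jumps ``precisely'' on $\bigcup_{\lambda,\,Q(\lambda)=-n}\lambda^\perp$ is the heart of the matter: a priori the sum could be smooth across some of these geodesics, and one must also verify condition (iii) of the definition of a local Maa{\ss} form (the averaging of one-sided limits) on the singular locus. The paper does this work by evaluating the hypergeometric function at the relevant integral and half-integral parameters into elementary functions containing
\begin{align*}
\arcsin\left(\frac{\sqrt{D}\,y}{\vt{az^2+bz+c}}\right)=\arctan\vt{\frac{\sqrt{D}\,y}{a\vt{z}^2+bx+c}},
\end{align*}
whose argument blows up exactly when $a\vt{z}^2+bx+c=0$; this simultaneously exhibits the jump, locates it on the Heegner geodesics of discriminants $D=1,\dots,N$ determined by the principal part, and makes the averaging condition visible. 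Without some version of this explicit (or at least local) analysis near $Q(\lambda_{z^\perp})=Q(\lambda)$, your final step does not go through. A smaller point: the matching $j=\frac{\ell+d^++d^-}{2}$ forces $d^++d^-=2j-\ell\geq 0$ and of the correct parity, so ``the degrees are chosen accordingly'' hides a nontrivial compatibility constraint that should be made explicit.
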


\begin{remarks}
\
\begin{enumerate}
\item Theorem \ref{Cor: intro lhmf} generalizes immediately to any weakly holomorphic modular form $g$. The exceptional set is given by the union of geodesics of discriminant $D>0$, for which the coefficient of $g$ at $q^{-D}$ is non-zero. 
\item Recently, Wagner \cite{Wag} constructed a pullback of $\Hc_{\ell}$ under the $\xi$-operator, namely a harmonic Maa{\ss} form $\Hs_{\ell}$ of weight $-\ell+\frac{1}{2}$ on $\Gamma_0(4)$ that satisfies $\xi_{\frac{1}{2}-\ell}\Hs_{\ell} = \Hc_{\ell+2}$. An explicit definition of $\Hs_{\ell}$ can be found in \cite{Wag}*{eq.\ (1.5), (1.6)}. However, $\Hs_{\ell}$ is a harmonic Maa{\ss} form with non-cuspidal image under $\xi$, and we restrict ourselves to a more restricitve growth condition in Section \ref{Sec: formdef} to ensure convergence of our lift. It would be interesting to investigate different regularizations of our lift, and in particular lift the function $\Hs_{\ell}$.
\end{enumerate}
\end{remarks}

The paper is organized as follows. We establish the overall framework in Section \ref{Sec: prelims}. Section \ref{Sec: lift} is devoted to two evaluations of our theta lift, and to the proof of Theorem \ref{Thm: main}. In Section \ref{Sec: LB}, we compute the action of the Laplace-Beltrami operator on our theta lift and prove Theorem \ref{Thm: intro lhmf}. Lastly, Section \ref{Sec: CES} offers more details on the specialization to signature $(1,2)$, a proof of Theorem \ref{Cor: intro lhmf}, and an indication on Eichler--Selberg type relations for Cohen--Eisenstein series at the very end.

\section*{Acknowledgments}
The authors thank Markus Schwagenscheidt for fruitful discussions on the topics of the paper, and for many valuable comments on a previous version. The authors would also like to thank Ken Ono for suggesting the example of Cohen--Eisenstein series, and Kathrin Bringmann as well as the anonymous referee for helpful comments on a previous version of the paper.

\section{Preliminaries}\label{Sec: prelims}
We summarize some facts, which we require throughout.

\subsection{The Weil representation}
We recall the metaplectic double cover
\begin{align*}
\widetilde{\Gamma} \coloneqq \text{Mp}_2(\Z) \coloneqq \left\{ (\gamma, \phi) \colon \gamma = \left(\begin{smallmatrix} a & b \\ c & d \end{smallmatrix}\right)\in \slz, \ \phi\colon \H \rightarrow \C \text{ holomorphic}, \ \phi^2(\tau) = c\tau+d  \right\},
\end{align*}
of $\slz$, which is generated by the pairs
\begin{align*}
\widetilde{T} := \left(\left( \begin{matrix} 1 & 1 \\ 0 & 1 \end{matrix} \right),1\right), \qquad \widetilde{S} := \left(\left( \begin{matrix} 0 & -1 \\ 1 & 0 \end{matrix}\right) ,\sqrt{\tau}\right),
\end{align*}
where we fix a suitable branch of the complex square root throughout. Furthermore, we define $\widetilde{\Gamma}_\infty$ as the subgroup generated by $\widetilde{T}$. 

We let $L$ be an even lattice of signature $(r,s)$, and $Q$ be a quadratic form on $L$ with associated bilinear form $(\cdot,\cdot)_Q$. Moreover, we denote the dual lattice of $L$ by $L'$, and the group ring of $L' \slash L$ by $\C[L'\slash L]$. The group ring $\C[L'\slash L]$ has a standard basis, whose elements will be called $\e_{\mu}$ for $\mu \in L'\slash L$. We recall that there is a natural bilinear form  $\langle\cdot,\cdot\rangle$ on $\C[L'\slash L]$ defined by $\langle\e_{\mu},\e_{\nu}\rangle = \delta_{\mu,\nu}$. 

Equipped with this structure, the Weil representation $\rho_L$ of $\widetilde{\Gamma}$ associated to $L$ is defined on the generators by
\begin{align*}
\rho_L\left(\widetilde{T}\right)(\e_\mu) \coloneqq e(Q(\mu)) \e_\mu, \qquad
\rho_L\left(\widetilde{S}\right)(\e_\mu) \coloneqq \frac{e\left(\frac18(s-r)\right)}{\sqrt{|L'\slash L|}} \sum_{\nu \in L'\slash L} e(-(\nu,\mu)_Q) \e_{\nu},
\end{align*}
where we stipulate $e(x) \coloneqq e^{2\pi i x}$ throughout. We let $L^- \coloneqq (L,-Q)$, and call $\rho_{L^-}$ the dual Weil representation of $L$. 

\subsection{The generalized upper half plane and the invariant Laplacian}
We follow the introduction in \cite{bruinier2004borcherds}*{Sections 3.2, 4.1}, and let the signature of $L$ be $(2,s)$ here. We assume that $L$ is isotropic, i.e. that it contains a non-trivial vector $x$ of norm $0$, and by rescaling we may assume that it is primitive, that is if $x=cy$ for some $y \in L$ and $c \in \Z$ then $c = \pm 1$. Note that for $s\geq 3$ all lattices contain such an isotropic vector (see \cite{bor}*{Section 8}).

Let $z \in L$ be a primitive norm $0$ vector, and $z' \in L'$ with $(z,z')_Q = 1$. Let $K \coloneqq L \cap z^\perp \cap z'^\perp$. Let $d \in K$ be a primitive norm $0$ vector, and $d' \in K'$ with $(d,d')_Q = 1$. It follows that $D \coloneqq K \cap d^\perp \cap d'^\perp$ is a negative definite lattice, and we write
\begin{align*}
Z = (d'-Q(d')d)z_1+z_2d+z_3d_3+\ldots+z_{\ell}d_{\ell} \eqqcolon (z_1,z_2,\ldots,z_{\ell}) \in K \otimes \C,
\end{align*}
since $z_3d_3+\ldots+z_{\ell}d \in D \otimes \C$. Each $z_j$ has a real part $x_j$ and a imaginary part $y_j$, and we note that
\begin{align*}
Q(Y) \coloneqq Q(y_1,\ldots,y_{\ell}) = y_1y_2-y_3^2-y_4^2-\ldots-y_{\ell}^2.
\end{align*}
This gives rise to the generalized upper half plane 
\begin{align*}
\H_{\ell} \coloneqq \left\{Z \in K \otimes \C \colon y_1 > 0, Q(Y) > 0\right\} \cong \Gr(L).
\end{align*}
Letting
\begin{align*}
\partial_\mu \coloneqq \frac{\partial}{\partial z_\mu} = \frac{1}{2}\left(\frac{\partial}{\partial x_\mu} - i \frac{\partial}{\partial y_\mu}\right), \qquad \overline{\partial}_\mu \coloneqq \frac{\partial}{\partial \overline{z}_\mu} = \frac{1}{2}\left(\frac{\partial}{\partial x_\mu} + i \frac{\partial}{\partial y_\mu}\right),
\end{align*}
it can be shown that the invariant Laplacian on $\H_{\ell}$ has the coordinate representation \cite{Nak}
\begin{align*}
\Omega \coloneqq \sum_{\mu,\nu = 1}^{\ell} y_\mu y\nu \partial_\mu \overline{\partial}_\nu - Q(Y)\left(\partial_1 \overline{\partial}_2 + \overline{\partial}_1\partial_2 - \frac{1}{2}\sum_{\mu=3}^{\ell}\partial_\mu \overline{\partial}_\mu\right).
\end{align*}

\subsection{Maa{\ss} forms} \label{Sec: formdef}
Let $\kappa \in \frac{1}{2} \Z$, $(\gamma,\phi) \in \widetilde{\Gamma}$, and consider a function $f\colon \H \rightarrow \C[L' \slash L]$. The modular transformation in this setting is captured by the slash-operator
\begin{align*}
f\mid_{\kappa,\rho_{L}}(\gamma,\phi) (\tau) \coloneqq \phi(\tau)^{-2\kappa}\rho_{L}^{-1}(\gamma,\phi)f(\gamma\tau),
\end{align*}
which leads to vector-valued Maa{\ss} forms as follows \cite{bruinierfunke2004}.

\begin{definition}
Let $f\colon \H \rightarrow \C[L' \slash L]$ be smooth. Then $f$ is a Maa{\ss} form of weight $\kappa$ with respect to $\rho_L$ if it satisfies the following three conditions.
\begin{enumerate}[wide, labelwidth=0pt, labelindent=0pt]
\item We have $f\mid_{\kappa,\rho_{L}}(\gamma,\phi) (\tau) = f(\tau)$ for every $\tau \in \H$ and every $(\gamma,\phi) \in \widetilde{\Gamma}$.
\item The function $f$ is an eigenfunction of the weight $\kappa$ hyperbolic Laplace operator, which is explicitly given by
\begin{align*}
\Delta_{\kappa} \coloneqq -v^2\left(\frac{\partial^2}{\partial u^2} + \frac{\partial^2}{\partial v^2} \right) + i\kappa v\left(\frac{\partial}{\partial u} + i\frac{\partial}{\partial v} \right)
\end{align*}
\item There exists a polynomial\footnote{Such a polynomial is called the principal part of $f$.} in $q$ denoted by $P_f \colon \{0 < \vt{w} < 1 \} \to \C[L' \slash L]$ such that $f(\tau) - P_f(q) \in O\left(e^{-\varepsilon v}\right)$ as $v \to \infty$ for some $\varepsilon > 0$.
\end{enumerate}
We call $f$ a harmonic Maa{\ss} form if the eigenvalue equals $0$.
\end{definition}

We write $H_{\kappa,L}$ for the vector space of harmonic Maa{\ss} forms of weight $\kappa$ with respect to $\rho_L$, and $M_{\kappa,L}^! \subseteq H_{\kappa,L}$ for the subspace of weakly holomorphic vector valued modular forms. The subspace $S_{\kappa,L}^! \subseteq M_{\kappa,L}^!$ collects all forms that vanish at all cusps, and such forms are referred to as weakly holomorphic cusp forms.

Bruinier and Funke \cite{bruinierfunke2004} proved that a harmonic Maa{\ss} form $f$ of weight $\kappa \neq 1$ decomposes as a sum $f = f^++f^-$ of a holomorphic and a non-holomorphic part, whose Fourier expansions are of the shape
\begin{align*}
f^+(\tau) &= \sum_{\mu \in L' \slash L} \sum_{\substack{n \in \Q \\ n \gg - \infty}} c_f^+(\mu,n) q^n \e_{\mu}, \qquad f^-(\tau) = \sum_{\mu \in L' \slash L} \sum_{\substack{n \in \Q \\ n < 0}} c_f^-(\mu,n) \Gamma\left(1-\kappa,4\pi\vt{n}v\right)q^n \e_{\mu},
\end{align*}
where $\Gamma(t,x) \coloneqq \int_x^\infty u^{t-1} e^{-u} du$ denotes the incomplete Gamma function.

Harmonic Maa{\ss} forms can be inspected via the action of various differential operators. We require the antiholomorphic operator
\begin{align*}
\xi_\kappa \coloneqq 2iv^\kappa \overline{\frac{\partial}{\partial {\overline{\tau}}}},
\end{align*}
as well as the Maa{\ss} raising and lowering operators
\begin{align*}
R_{\kappa} \coloneqq 2i\frac{\partial}{\partial\tau} + \frac{\kappa}{v}, \qquad L_{\kappa} \coloneqq -2iv^{2} \frac{\partial}{\partial {\overline{\tau}}}.
\end{align*}
The operator $\xi_\kappa$ defines a surjective map from $H_{\kappa,L}$ to $S_{2-\kappa,L^-}^!$ \cite{bruinierfunke2004}. In particular, it intertwines with the slash operator introduced above, and the space $M_{\kappa,L}^!$ is precisely the kernel of $\xi_\kappa$ when restricted to $H_{\kappa,L}$. Hence, every $f \in H_{\kappa,L}$ has a cuspidal shadow in our case.

The operators $R_\kappa$ and $L_\kappa$ increase and decrease the weight $\kappa$ by $2$ respectively, but do not preserve the eigenvalue under $\Delta_{\kappa}$. For any $n \in \N_0$, we let
\begin{align*}
R_{\kappa}^0 &\coloneqq \mathrm{id}, \quad R_{\kappa}^n \coloneqq R_{\kappa+2n-2} \circ \ldots \circ R_{\kappa+2} \circ R_{\kappa}, \\
L_{\kappa}^0 &\coloneqq \mathrm{id}, \quad L_{\kappa}^n \coloneqq L_{\kappa-2n+2} \circ \ldots \circ L_{\kappa-2} \circ L_{\kappa}
\end{align*}
be the iterated Maa{\ss} raising and lowering operators, which increase or decrease the weight $\kappa$ by $2n$.

\begin{remark}
If one relaxes the growth condition (iii) to linear exponential growth, that is $f(\tau) \in O\left(e^{\varepsilon v}\right)$ as $v \to \infty$ for some $\varepsilon > 0$, then $f^-$ is permitted to have an additional (constant) term of the form $c_f^-(\mu,0) v^{1-\kappa} \e_\mu$. In this case, $\xi_\kappa$ maps such a form to a weakly holomorphic modular form instead of a weakly holomorphic cusp form.
\end{remark}

\subsubsection{Local Maa{\ss} forms}
Locally harmonic Maa{\ss} forms were introduced by Bringmann, Kane, Kohnen \cite{BKK} for negative weights, and independently by Hövel \cite{hoevel} for weight $0$. We generalize the exposition due to Bringmann, Kane, Kohnen here, and provide a definition in our setting on Grassmannians and for arbitrary eigenvalues.

\begin{definition}
A local Maa{\ss} form of weight $\kappa$ with closed exceptional set $X \subsetneq \H_{\ell}$ of measure zero is a function $f \colon \H_{\ell} \to \C[L' \slash L]$, which satisfies the following four properties:
\begin{enumerate}[label=(\roman*), wide, labelindent=0pt]
\item For all $(\gamma,\phi) \in \widetilde{\Gamma}$ and all $Z \in \H_{\ell}$ it holds that $f\mid_{\kappa,\rho_{L}}(\gamma,\phi) (Z) = f(Z)$.
\item For every $Z \in \H_{\ell} \setminus X$, there exists a neighborhood of $Z$, in which $f$ is real analytic and an eigenfunction of $\Omega$.
\item We have
\begin{align*}
f(Z) = \frac{1}{2} \lim_{\varepsilon \searrow 0} \left(f\left(Z+(i\varepsilon,0,\ldots,0)^t\right) + f\left(Z-(i\varepsilon,0,\ldots,0)^t\right)\right)
\end{align*}
for every $Z \in X$.
\item The function $f$ is of at most polynomial growth towards all cusps.
\end{enumerate}
\end{definition}
Paralleling the definition of harmonic Maa{\ss} forms, we call a local Maa{\ss} form locally harmonic if the eigenvalue from the second condition is $0$.

\subsection{Poincar\'{e} series}\label{Sec: MP}

\subsubsection{Weakly holomorphic Poincar{\'e} series}
Following Knopp, Mason \cite{KM}*{Section $3$}, we let $m \in \Z$, $\kappa \in \frac{1}{2}\N$ satisfying $\kappa > 2$, $\mu \in L'\slash L$, and define
\begin{align*}
\Fb_{\mu,m,\kappa}(\tau) \coloneqq \frac{1}{2}\sum_{(\gamma,\phi) \in \widetilde{\Gamma}_\infty \backslash \widetilde{\Gamma}} \left(e((m+1)\tau) \e_\mu\right) \mid_{\kappa, \rho_L} (\gamma,\phi).
\end{align*}
The authors of \cite{KM} prove that $\Fb_{\mu,m,\kappa}$ converges absolutely, and that it defines a weakly holomorphic modular form of weight $\kappa$ for $\rho_L$. In addition, they computed the Fourier expansion of $\Fb_{\mu,m,\kappa}$, which is of the shape
\begin{align*}
\Fb_{\mu,m,\kappa}(\tau) = \sum_{\nu \in L'\slash L} \left(\delta_{\mu,\nu}q^{m+1} + \sum_{n\geq 0} c(n)q^{n+1}\right)\e_{\nu}.
\end{align*}
The Fourier coefficients $c(n)$ can be found in \cite{KM}*{Theorem $3.2$} explicitly.

\subsubsection{Maa{\ss} Poincar\'{e} series}
We recall an important example of harmonic Maa{\ss} forms. To this end, let $\kappa \in -\frac{1}{2}\N$, let $M_{\mu, \nu}$ be the usual $M$-Whittaker function (see \cite{nist}*{§13.14}), and define the auxiliary function
\begin{align*}
\Mc_{\kappa,\sfr} (y) \coloneqq |y|^{-\frac{\kappa}{2}} M_{\sgn(y) \frac{\kappa}{2}, \sfr-\frac{1}{2}} (|y|), \qquad y \in \R\setminus\left\{0\right\}.
\end{align*}
We average $\Mc_{\kappa}$ over $\widetilde{\Gamma}$ as usual with respect to the parameters $\mu \in L' \slash L$, $m \in \N \setminus\left\{Q(\mu)\right\}$, and $\kappa$, $\sfr$. This yields the vector valued Maa{\ss} Poincar\'{e} series \cite{bruinier2004borcherds}
\begin{align*}
F_{\mu, m, \kappa,\sfr}(\tau) \coloneqq \frac{1}{2 \Gamma(2\sfr)} \sum_{(\gamma,\phi) \in \widetilde{\Gamma}_\infty \backslash \widetilde{\Gamma}} \left(\Mc_{\kappa,\sfr} (4 \pi m v) e(-mu) \e_\mu\right) \mid_{\kappa, \rho_L} (\gamma,\phi).
\end{align*}
By our choice of parameters and taking cosets, the series converges absolutely. The eigenvalue under $\Delta_{\kappa}$ is given by $(\sfr-\frac{\kappa}{2})(1-\sfr-\frac{\kappa}{2})$. Hence if $\sfr = \frac{\kappa}{2}$ or $\sfr = 1-\frac{\kappa}{2}$, then we have $F_{\mu, m, \kappa,\sfr} \in H_{\kappa,L}$. The principal part of $F_{\mu, m, \kappa,\sfr}$ is given by $e(-m\tau)(\e_{\mu}+\e_{-\mu})$ in this case, and $\xi_\kappa F_{\mu, -m, \kappa,\sfr}$ is a weight $2-\kappa$ cusp form.

Furthermore, the Maa{\ss} Poincar\'{e} series have the following useful property thanks to their simple principal part.
\begin{lemma} \label{lem:fdecomp}
Let $f \in H_{\kappa,L}$ with $\kappa \in -\frac{1}{2}\N$, and principal part
\begin{align*}
P_f(\tau) = \sum_{\mu \in L'\slash L} \sum_{n < 0} c_f^+(\mu,n) e(n\tau) \e_\mu \in \C[L'\slash L]\left[e(-\tau)\right].
\end{align*}
Then, we have
\begin{align*}
f(\tau) = \frac{1}{2} \sum_{\mu \in L' \slash L} \sum_{m > 0} c_f^+(\mu,-m) F_{\mu,m,\kappa,1-\frac{\kappa}{2}}(\tau).
\end{align*}
\end{lemma}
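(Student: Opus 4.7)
The plan is to set $g(\tau) \coloneqq \frac{1}{2}\sum_{\mu,\, m>0} c_f^+(\mu,-m)\, F_{\mu,m,\kappa,1-\frac{\kappa}{2}}(\tau)$, the candidate right-hand side, and to show that $h \coloneqq f-g$ vanishes identically. This splits naturally into two tasks: matching the principal parts, and then invoking uniqueness for harmonic Maa{\ss} forms of negative weight.

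For the first task, I would use that each $F_{\mu,m,\kappa,1-\frac{\kappa}{2}}$ lies in $H_{\kappa,L}$ with principal part $e(-m\tau)(\e_\mu+\e_{-\mu})$, as recorded in Section~\ref{Sec: MP}. The element $\widetilde{S}^2$ acts on $\C[L'\slash L]$ by sending $\e_\mu$ to $\e_{-\mu}$ up to a phase determined by $\kappa$ and $(r,s)$, which is trivial under the compatibility condition that makes $\rho_L$-modular forms of weight $\kappa$ non-trivial. Invariance of $f$ under this element yields the symmetry $c_f^+(\mu,n) = c_f^+(-\mu,n)$, and a relabelling gives
\begin{align*}
P_g(\tau)
= \frac{1}{2}\sum_{\mu \in L'\slash L}\sum_{m>0} c_f^+(\mu,-m)\, e(-m\tau)\bigl(\e_\mu+\e_{-\mu}\bigr)
= \sum_{\mu \in L'\slash L}\sum_{m>0} c_f^+(\mu,-m)\, e(-m\tau)\, \e_\mu = P_f(\tau).
\end{align*}

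For the second task, $h \in H_{\kappa,L}$ now has trivial principal part. I would appeal to the Bruinier--Funke pairing: the Petersson inner product of $\xi_\kappa h \in S_{2-\kappa, L^-}$ against an arbitrary cusp form is a finite $\C$-linear combination of principal part coefficients of $h$, hence vanishes. Non-degeneracy of the Petersson pairing forces $\xi_\kappa h = 0$, so $h \in M^!_{\kappa,L}$. The growth condition in the definition of $H_{\kappa,L}$ together with the vanishing of the principal part forces $h$ to be holomorphic at $\infty$ (including a vanishing constant term, since any non-zero $c_h^+(\mu,0)\e_\mu$ would obstruct the $O(e^{-\varepsilon v})$ decay). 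Thus $h \in M_{\kappa,L}$, which is trivial for $\kappa \in -\frac{1}{2}\N$ (for instance by the valence formula applied to scalar components after reduction to a subgroup of finite index), so $h = 0$ and $f = g$.

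The main obstacle I anticipate is the bookkeeping in step one, namely checking that the phase coming from $\rho_L(\widetilde{S}^2)$ really is $+1$ in the present set-up so that the factor of $\frac{1}{2}$ in the statement is exactly right; a sign error here would replace the claimed identity by $0 = 0$. Step two is standard modulo precise citations of the Bruinier--Funke pairing in the vector-valued weakly holomorphic framework and of the vanishing of $M_{\kappa,L}$ for negative $\kappa$.
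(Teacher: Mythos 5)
Your proof is correct. The paper does not actually supply an argument for this lemma (it is stated as a consequence of the ``simple principal part'' of the Poincar\'e series), so your write-up fills in exactly the standard argument being invoked: match principal parts, then kill the difference via the Bruinier--Funke pairing and the vanishing of holomorphic forms of negative weight. Two small remarks. First, on the phase you worry about: the element $\widetilde{S}^2$ contributes the same factor $(-1)^{(2\kappa+s-r)/2}$ both to the symmetry $c_f^+(\mu,n)=\pm c_f^+(-\mu,n)$ and to the $\e_{-\mu}$-term in the principal part of the averaged series $F_{\mu,m,\kappa,\sfr}$ (which is $e(-m\tau)(\e_\mu\pm\e_{-\mu})$ accordingly), so the relabelling $\mu\mapsto-\mu$ cancels the sign in either parity class and the identity $P_g=P_f$ holds unconditionally; the factor $\tfrac12$ is correct in both cases. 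Second, in the last step you do not need to treat the constant term separately: once $h$ is weakly holomorphic with no negative-index coefficients it is a holomorphic modular form of negative weight, hence zero by the valence formula, constant term and all.
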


Additionally, we require the following computational lemma, which is taken from \cite{ANBMS}*{Lemma 2.1}, and follows inductively from \cite{bruinier2020greens}*{Proposition 3.4}.
\begin{lemma} \label{lem:mpraise}
For any $n \in \N_0$ it holds that
\begin{align*}
R_\kappa^n \left(F_{\mu, m, \kappa,\sfr}\right)(\tau) = (4\pi m)^n \frac{\Gamma\left(\sfr+n+\frac{\kappa}{2}\right)}{\Gamma\left(\sfr+\frac{\kappa}{2}\right)} \ F_{\mu, m,\kappa+2n,\sfr}(\tau).
\end{align*}
\end{lemma}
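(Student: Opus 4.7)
The plan is to argue by induction on $n$, as the hint in the statement already suggests. The base case $n=0$ is trivial since $R_\kappa^0 = \mathrm{id}$ and the Gamma quotient is $1$. For the base case $n=1$, the content is exactly the identity
\begin{align*}
R_\kappa\bigl(F_{\mu,m,\kappa,\sfr}\bigr)(\tau) = 4\pi m \left(\sfr+\tfrac{\kappa}{2}\right) F_{\mu,m,\kappa+2,\sfr}(\tau),
\end{align*}
which is precisely the cited Proposition 3.4 of \cite{bruinier2020greens}. I would briefly recall the mechanism. Since the raising operator intertwines with the slash action in the sense that $R_\kappa\bigl(f\mid_{\kappa,\rho_L}(\gamma,\phi)\bigr) = (R_\kappa f)\mid_{\kappa+2,\rho_L}(\gamma,\phi)$, and because one checks uniform convergence of the Poincar\'e series on compacta (guaranteed by our choice of parameters), the operator $R_\kappa$ commutes with the $\widetilde{\Gamma}_\infty\backslash\widetilde{\Gamma}$-averaging and with the overall normalization $\tfrac{1}{2\Gamma(2\sfr)}$. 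Hence the statement reduces to the computation of $R_\kappa$ on the seed $\Mc_{\kappa,\sfr}(4\pi m v)\,e(-mu)\,\e_\mu$.

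Setting $y = 4\pi m v$ and using $R_\kappa = i\partial_u + \partial_v + \kappa/v$, a direct calculation on the seed yields
\begin{align*}
R_\kappa\bigl(\Mc_{\kappa,\sfr}(4\pi m v) e(-mu)\bigr) = 2\pi m \left[\Mc_{\kappa,\sfr}(y) + 2\Mc_{\kappa,\sfr}'(y) + \tfrac{2\kappa}{y}\Mc_{\kappa,\sfr}(y)\right] e(-mu),
\end{align*}
and the bracketed expression simplifies via the standard contiguous relation for the $M$-Whittaker function (see \cite{nist}, \S13.15) to $2\bigl(\sfr+\tfrac{\kappa}{2}\bigr)\Mc_{\kappa+2,\sfr}(y)$. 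This yields the base case after re-averaging.

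For the inductive step, assume the formula holds at level $n$. Applying $R_{\kappa+2n}$ to both sides and using the base case at weight $\kappa+2n$ (with the same shift $\sfr$) gives
\begin{align*}
R_{\kappa}^{n+1}\bigl(F_{\mu,m,\kappa,\sfr}\bigr) &= (4\pi m)^n \frac{\Gamma(\sfr+n+\tfrac{\kappa}{2})}{\Gamma(\sfr+\tfrac{\kappa}{2})}\, R_{\kappa+2n}\bigl(F_{\mu,m,\kappa+2n,\sfr}\bigr) \\
&= (4\pi m)^{n+1} \frac{\Gamma(\sfr+n+\tfrac{\kappa}{2})}{\Gamma(\sfr+\tfrac{\kappa}{2})}\left(\sfr+n+\tfrac{\kappa}{2}\right) F_{\mu,m,\kappa+2(n+1),\sfr},
\end{align*}
and $\bigl(\sfr+n+\tfrac{\kappa}{2}\bigr)\Gamma(\sfr+n+\tfrac{\kappa}{2}) = \Gamma(\sfr+(n+1)+\tfrac{\kappa}{2})$ closes the induction.

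The only genuine piece of work is the Whittaker identity underpinning the base case; once that is in place the rest is purely formal, as the induction simply telescopes the shifted Gamma factors. Since this Whittaker identity is already packaged in \cite{bruinier2020greens}, I would treat it as a black box and present the lemma mainly as the telescoping induction described above.
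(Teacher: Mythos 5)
Your argument is correct and matches the paper's intended route exactly: the paper gives no proof, merely citing \cite{ANBMS}*{Lemma 2.1} and noting that the statement ``follows inductively from \cite{bruinier2020greens}*{Proposition 3.4},'' which is precisely your base case plus telescoping induction. Your unpacking of the $n=1$ step via the Whittaker contiguous relation is a correct (and slightly more self-contained) supplement to what the paper leaves as a citation.
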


\subsection{Restriction, trace maps, and Rankin--Cohen brackets}
As before, we fix an even lattice $L$. We let $A_{\kappa,L}$ be the space of smooth functions $f \colon \H \to \C[L' \slash L]$, which are invariant under the weight $\kappa$ slash operator with respect to the representation $\rho_L$. Moreover, let $K \subseteq L$ be a finite index sublattice. Hence, we have $L' \subseteq K'$, and thus 
\begin{align*}
L \slash K \subseteq L'\slash K \subseteq K'\slash K.
\end{align*}
This induces a map
\begin{align*}
L' \slash K &\to L' \slash L \\
\mu &\mapsto \bar{\mu}.
\end{align*}
If $\mu \in K'\slash K$, $f \in A_{\kappa, L}$, $g \in A_{\kappa,K}$, and $\mu$ is a fixed preimage of $\bar{\mu}$ in $L'\slash K$, we define
\begin{align*}
(f_K)_\mu := \begin{cases}
f_{\bar{\mu}} & \text{ if } \mu \in L' \slash K, \\
0 & \text{ if } \mu \not\in L' \slash K,
\end{cases} 
\qquad 
\left(g^L\right)_{\bar{\mu}} = \sum_{\alpha \in L \slash K} g_{\alpha + \mu},
\end{align*}

The following lemma may be found in \cite{bruinierFaltings}*{Section 3}.
\begin{lemma} \label{lem:restrace}
In the notation above, there are two natural maps
\begin{align*}
\begin{array}{rlrl}
\textup{res}_{L\slash K} \colon A_{\kappa, L} &\to A_{\kappa, K}, & \tr_{L\slash K} \colon A_{\kappa, K} &\to A_{\kappa, L}, \\
f &\mapsto f_K & g &\mapsto g^L
\end{array}
\end{align*}
satisfying
\begin{align*}
\langle f, \overline{g}^L \rangle = \langle f_K, \overline{g} \rangle
\end{align*}
for any $f \in A_{\kappa,L}$, $g \in A_{\kappa,K}$.
\end{lemma}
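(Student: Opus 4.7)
The plan is to verify two separate things: first, that the maps $\textup{res}_{L\slash K}$ and $\tr_{L\slash K}$ land in the asserted spaces (i.e.\ respect the weight $\kappa$ slash operator with respect to the correct Weil representations), and second, the adjointness identity.

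I would begin with well-definedness. The restriction $(f_K)_\mu$ requires no additional choice, since $L'/K \to L'/L$, $\mu \mapsto \bar\mu$ is a natural surjection and $f_{\bar\mu}$ depends only on the class $\bar\mu$. For the trace, the sum $\sum_{\alpha \in L/K} g_{\alpha+\mu}$ must be independent of the chosen preimage $\mu \in L'/K$ of $\bar\mu \in L'/L$; but changing the preimage translates $\mu$ by an element of $L/K$, which only permutes the summands.

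The bulk of the work is verifying $\widetilde\Gamma$-equivariance. Since $\widetilde\Gamma$ is generated by $\widetilde T$ and $\widetilde S$, it suffices to check both. For $\widetilde T$ the action is the diagonal twist $\e_\mu \mapsto e(Q(\mu))\e_\mu$, and because $Q$ on $L'/L$ agrees with $Q$ on $K'/K$ restricted to the image of $L'/K$ (modulo $\Z$, using evenness of $L$ and integrality of the pairing between $L$ and $L'$), the twist commutes with both $\textup{res}_{L\slash K}$ and $\tr_{L\slash K}$. For $\widetilde S$ the main input is the orthogonality identity
\[
\sum_{\alpha \in L/K} e(-(\mu,\alpha)_Q) = \begin{cases}[L:K] & \text{if } \mu \in L'/K, \\ 0 & \text{otherwise,}\end{cases}
\]
for $\mu \in K'/K$, together with $|K'/K| = [L:K]^2 |L'/L|$. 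Using these, one checks that $\rho_K(\widetilde S)$ applied to a vector supported on $L'/K$ matches, after trace, with $[L:K]$ times $\rho_L(\widetilde S)$ applied to its image in $L'/L$, and analogously for the restriction; the factor $[L:K]$ precisely accounts for the mismatch between the normalizing constants $\sqrt{|K'/K|}$ and $\sqrt{|L'/L|}$. The signature prefactor $e((s-r)/8)$ is identical for $\rho_L$ and $\rho_K$ since $K$ and $L$ have the same signature.

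The adjointness identity is then a direct combinatorial manipulation: by definition,
\[
\langle f, \overline{g}^L \rangle = \sum_{\bar\mu \in L'/L} f_{\bar\mu}\, \overline{(g^L)_{\bar\mu}} = \sum_{\bar\mu \in L'/L}\sum_{\alpha \in L/K} f_{\bar\mu}\, \overline{g_{\alpha+\mu}},
\]
and as $\bar\mu$ ranges over $L'/L$ and $\alpha$ over $L/K$, the element $\alpha + \mu$ ranges bijectively over $L'/K$. Rewriting the double sum as a single sum over $\nu \in L'/K$ and using that $(f_K)_\nu = 0$ outside $L'/K$ extends it to all of $K'/K$, producing $\langle f_K, \overline g\rangle$. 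The main obstacle is the $\widetilde S$-equivariance; everything else amounts to bookkeeping with finite quadratic modules.
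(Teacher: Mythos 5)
Your proof is correct. Note, however, that the paper does not prove this lemma at all: it simply cites Bruinier--Yang, \emph{Faltings heights of CM cycles and derivatives of $L$-functions}, Section 3, so there is no in-paper argument to compare against. Your verification is the standard one and matches what one finds in that reference: well-definedness of the trace because changing the preimage $\mu$ of $\bar\mu$ only permutes the summands over $L/K$; $\widetilde T$-equivariance from $Q(\mu+\lambda)\equiv Q(\mu)\pmod{\Z}$ for $\mu\in L'$, $\lambda\in L$; $\widetilde S$-equivariance from the character orthogonality $\sum_{\alpha\in L/K}e(-(\mu,\alpha)_Q)=[L:K]$ or $0$ according as $\mu\in L'/K$ or not, combined with $|K'/K|=[L:K]^2\,|L'/L|$ (equivalently $[K':L']=[L:K]$) to reconcile the Gauss-sum normalizations; and the adjunction identity from the bijection between $L'/K$ and the fibered set $\bigsqcup_{\bar\mu\in L'/L}(\mu+L/K)$. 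The one point I would phrase more carefully is the $\widetilde S$-check for the trace map: there $g$ is \emph{not} supported on $L'/K$, and it is precisely the vanishing half of the orthogonality relation that annihilates the components $g_{\mu'}$ with $\mu'\notin L'/K$ after summing over $\alpha\in L/K$; your sketch attributes the mechanism to the right identity but describes it as if applied to a vector already supported on $L'/K$. This is a presentational quibble, not a gap.
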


Let $\kappa$, $\ell \in \frac{1}{2} \Z$, $f \in A_{\kappa,K}$, $g \in A_{\ell,L}$. Writing
\begin{align*}
f = \sum_{\mu}f_\mu \e_\mu, \qquad g = \sum_{\nu}g_\nu \e_\nu,
\end{align*}
and letting $n \in \N_0$, we define the tensor product of $f$ and $g$ as well as the $n$-th Rankin--Cohen bracket of $f$ and $g$ as
\begin{align*}
f \otimes g &\coloneqq \sum_{\mu , \nu} f_\mu g_\nu \e_{\mu + \nu} \in A_{\kappa+\ell,K \oplus L}, \\
[f,g]_n &\coloneqq \frac{1}{(2\pi i)^n}\sum_{\substack{r,s \geq 0\\ r+s=n}} \frac{ (-1)^r \Gamma(\kappa+n) \Gamma(\ell+n)}{\Gamma(s+1) \Gamma(\kappa+n-s) \Gamma(r+1) \Gamma(\ell+n-r)}  f^{(r)} \otimes g^{(s)},
\end{align*}
where $f^{(r)}$ and $g^{(s)}$ are usual higher derivatives of $f$ and $g$. Then we have the following vector-valued analogue of \cite{bruinier2020greens}*{Proposition 3.6}.
\begin{lemma}\label{Lem: RC Brackets}
Let $f \in H_{\kappa,L_1}$ and $g \in H_{\ell,L_2}$. For $n \in \N_0$ it holds that
\begin{align*}
(-4\pi)^n L_{\kappa+\ell+2n}\left([f,g]_n \right)= \frac{\Gamma(\kappa+n) }{n! \ \Gamma(\kappa)} L_\kappa (f) \otimes R_\ell^n (g) + (-1)^n \frac{ \Gamma(\ell+n)}{n! \ \Gamma(\ell)} R_\kappa^n (f) \otimes L_\ell (g).
\end{align*}
\end{lemma}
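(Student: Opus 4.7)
The plan is to reduce the vector-valued identity to its scalar-valued analogue \cite{bruinier2020greens}*{Proposition 3.6} by a componentwise argument on the group ring. Under the canonical identification
\begin{align*}
\C[(L_1\oplus L_2)'/(L_1\oplus L_2)] \;\cong\; \C[L_1'/L_1]\otimes\C[L_2'/L_2],
\end{align*}
the basis elements are $\e_{\mu+\nu}=\e_\mu\otimes\e_\nu$, and $(f\otimes g)_{\mu+\nu}=f_\mu g_\nu$. The operators $\partial_\tau,\partial_{\overline\tau},R_\kappa$ and $L_\kappa$ are built from scalar differential operators together with a weight factor $\kappa/v$, none of which mixes basis elements. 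Hence each of them acts componentwise on elements of $A_{\kappa,L}$.

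First I would expand $f=\sum_\mu f_\mu\e_\mu$ and $g=\sum_\nu g_\nu\e_\nu$ and read off the $\e_{\mu+\nu}$-components of both sides of the claimed identity. The left-hand side becomes $(-4\pi)^n L_{\kappa+\ell+2n}$ applied to the scalar Rankin--Cohen bracket of the smooth functions $f_\mu$ and $g_\nu$ at weights $\kappa,\ell$, while the right-hand side becomes the corresponding scalar expression
\begin{align*}
\frac{\Gamma(\kappa+n)}{n!\,\Gamma(\kappa)}\,L_\kappa(f_\mu)\,R_\ell^n(g_\nu)+(-1)^n\frac{\Gamma(\ell+n)}{n!\,\Gamma(\ell)}\,R_\kappa^n(f_\mu)\,L_\ell(g_\nu).
\end{align*}

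Next I would invoke the scalar identity \cite{bruinier2020greens}*{Proposition 3.6} for each pair $(\mu,\nu)$. That statement is proved purely algebraically via the Leibniz rule for $L=-2iv^2\partial_{\overline\tau}$ together with the elementary commutation $[\partial_\tau,v^2]=-iv$, and therefore does not use modular transformation properties; it applies verbatim to the smooth coefficient functions $f_\mu,g_\nu$. Summing the resulting componentwise identities against $\e_{\mu+\nu}$ and using bilinearity of the tensor product then assembles the desired vector-valued formula.

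The main potential obstacle is purely bookkeeping: one has to verify that the iterated raising operator $R_\ell^n$, which at intermediate stages involves weights other than $\ell$, still acts on the $\e_\nu$-component exactly as the scalar iterated raising operator applied to $g_\nu$. This follows by induction directly from the definition of $R_\kappa$ and the componentwise nature of each individual raising step. Should the cited scalar reference unexpectedly require some modular input that obstructs the reduction, the fallback is a direct calculation: apply $L_{\kappa+\ell+2n}$ to $[f,g]_n$ term by term via the Leibniz rule, use the commutator $[\partial_\tau,v^2]=-iv$ to move the $\partial_{\overline\tau}$ past the higher $\tau$-derivatives, and regroup the resulting double sums using standard Gamma-function identities in order to recognize the two telescoping series on the right-hand side.
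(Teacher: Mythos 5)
Your proposal is correct and takes essentially the same approach the paper intends: the paper states the lemma without proof, presenting it as the direct vector-valued analogue of \cite{bruinier2020greens}*{Proposition 3.6}, and your componentwise reduction (all operators $\partial_\tau$, $\partial_{\overline\tau}$, $R_\kappa$, $L_\kappa$ act diagonally on the basis $\e_\mu\otimes\e_\nu$ of the group ring, so the scalar identity applies to each pair of components $f_\mu$, $g_\nu$) is exactly the justification being left implicit. The observation that the scalar identity is a purely formal Leibniz-rule computation requiring no modularity is the right point to make, so the reduction is sound.
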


Finally, we have the following lemma, which can be verified straightforwardly (see \cite{ANBMS}*{Proof of Theorem 4.1}).
\begin{lemma}\label{Lem: tech}
	Let $h$ be a smooth function, $g$ be holomorphic, and $\kappa$, $\ell \in \R$. Then it holds that
	\begin{align*}
	R_{\ell-\kappa}(v^{\kappa}\overline{g} \otimes h) = v^k\overline{g} \otimes R_{\ell} h.
	\end{align*}
\end{lemma}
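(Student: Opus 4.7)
The plan is to unpack the definition $R_{\ell-\kappa} = 2i\partial_\tau + \frac{\ell-\kappa}{v}$, apply the product rule, and use that $g$ is holomorphic (so $\overline{g}$ is antiholomorphic and is annihilated by $\partial_\tau$). The only genuine computation needed is $\partial_\tau(v^\kappa)$, which follows from $v = \frac{\tau - \overline{\tau}}{2i}$ and gives $2i\partial_\tau(v^\kappa) = \kappa v^{\kappa-1}$.

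First I would write
\begin{align*}
R_{\ell-\kappa}\bigl(v^{\kappa}\overline{g}\otimes h\bigr)
= 2i\,\partial_\tau\!\bigl(v^{\kappa}\overline{g}\otimes h\bigr) + \tfrac{\ell-\kappa}{v}\,v^{\kappa}\overline{g}\otimes h,
\end{align*}
and then expand the derivative via Leibniz as
\begin{align*}
2i\,\partial_\tau\!\bigl(v^{\kappa}\overline{g}\otimes h\bigr)
= \bigl(2i\,\partial_\tau(v^{\kappa})\bigr)\overline{g}\otimes h
+ v^{\kappa}\bigl(2i\,\partial_\tau\overline{g}\bigr)\otimes h
+ v^{\kappa}\overline{g}\otimes 2i\,\partial_\tau h.
\end{align*}
The middle term vanishes because $g$ is holomorphic, and the first term equals $\kappa v^{\kappa-1}\overline{g}\otimes h$ by the computation above.

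Collecting terms, the non-derivative contributions combine to
\begin{align*}
\kappa\,v^{\kappa-1}\overline{g}\otimes h + (\ell-\kappa)\,v^{\kappa-1}\overline{g}\otimes h
= \ell\,v^{\kappa-1}\overline{g}\otimes h = v^{\kappa}\overline{g}\otimes \tfrac{\ell}{v}h,
\end{align*}
so the whole expression becomes $v^{\kappa}\overline{g}\otimes\bigl(2i\partial_\tau h + \tfrac{\ell}{v}h\bigr) = v^{\kappa}\overline{g}\otimes R_{\ell}h$, as claimed (interpreting the $v^k$ on the right-hand side as a typographical variant of $v^{\kappa}$). There is no serious obstacle here; the statement is essentially a bookkeeping identity that records how the weight shift in $R$ is absorbed when one of the tensor factors is antiholomorphic times a power of $v$, and the only place to be careful is tracking the constant $\frac{\ell-\kappa}{v}$ from the raising operator against the $\frac{\kappa}{v}$ produced by differentiating $v^{\kappa}$.
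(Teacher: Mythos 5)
Your computation is correct and is exactly the straightforward verification the paper has in mind (the paper omits the proof, merely citing it as an easy check following the proof of Theorem 4.1 in the Alfes-Neumann--Bringmann--Males--Schwagenscheidt reference). You are also right that the $v^k$ on the right-hand side of the stated identity is a typo for $v^{\kappa}$.
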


\subsection{Theta functions and special points}\label{Sec: theta functions}

We fix an even lattice $L$ of signature $(r,s)$, and extend the quadratic form on $L$ to $L \otimes \R$ in the natural way. We denote the orthogonal projection of $\lambda \in L+\mu$ onto the linear subspaces spanned by $z$ and its orthogonal complement with respect to $(\cdot,\cdot)_Q$ by $\lambda_z$ and $\lambda_{z^\perp}$ respectively. In other words, we have 
\begin{align*}
L \otimes \R = z \oplus z^\perp, \qquad \lambda = \lambda_z + \lambda_{z^\perp}.
\end{align*}
Let $\Gr(L)$ be the Grassmannian of $r$-dimensional subspaces of $L \otimes \R$. Let $Z \subseteq \Gr(L)$ be the set of all such subspaces on which $Q$ is positive definite. One can endow $Z$ with the structure of a smooth manifold.

Let $p_r \colon \R^{r,0} \to \C$, and $p_s \colon \R^{0,s} \to \C$ be spherical polynomials, which are homogeneous of degree $d^+$, $d^- \in \N_0$ respectively. Define
\begin{align*}
p \coloneqq p_r\otimes p_s,
\end{align*}
and let $\psi \colon L \otimes \R \to \R^{r,s}$ be an isometry. We set
\begin{align*}
z \coloneqq \psi^{-1}(\R^{r,0}) \in Z, \qquad z^{\perp} = \psi^{-1}(\R^{0,s}).
\end{align*}

For a positive-definite lattice $(K,Q)$  of rank $n$, and a homogeneous spherical polynomial $p$ of degree $d$, we define the usual theta function
\begin{align*}
	\Theta_K(\tau,\psi_K,p) \coloneqq \sum_{\lambda \in K'} p(\psi_K(\lambda)) e\left({Q(\lambda) \tau}\right),
\end{align*}
where $\psi_K$ is the isometry associated to $K$. It is a holomorphic modular form of weight $\frac{n}{2}+d$ for $\rho_K$. If the isometry is trivial, we write $\Theta_K(\tau,p)$.

Following Borcherds \cite{bor} and H\"{o}vel \cite{hoevel}, we define the general Siegel theta function as follows.\footnote{In fact, Borcherds considered a slightly more general theta function, where the polynomial $p$ does not necessarily vanish under $\Delta_\kappa$. For us however, this more general case would not yield spherical theta functions as we desire.}
\begin{definition}
Let $\tau \in \H$ and assume the notation above. Then we put
\begin{align*}
\Theta_L(\tau,\psi,p) \coloneqq v^{\frac{s}{2}+d^-}\sum_{\mu \in L'\slash L} \sum_{\lambda \in L+\mu} p\left(\psi(\lambda)\right) e\left(Q(\lambda_{z})\tau + Q(\lambda_{z^\perp})\overline{\tau}\right) \mathfrak{e}_\mu.
\end{align*}
\end{definition}

One can check that the function $\Theta_L$ converges absolutely on $\H \times Z$. The following result is \cite{hoevel}*{Satz 1.55}, which follows directly from \cite{bor}*{Theorem 4.1}.
\begin{lemma}
Let $(\gamma,\phi) \in \widetilde{\Gamma}$. Then we have
\begin{align*}
\Theta_L(\gamma\tau, \psi, p) = \phi(\tau)^{r+2d^+-(s+2d^-)} \rho_{L}(\gamma,\phi) \Theta_L(\tau,\psi,p).
\end{align*}
\end{lemma}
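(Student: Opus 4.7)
My plan is to verify the transformation law on the two generators $\widetilde{T}$ and $\widetilde{S}$ of the metaplectic group $\widetilde{\Gamma}$, since both sides are continuous in $\tau$ and the slash action respects the group law once checked on generators.

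For the translation $\widetilde{T}$, I have $\gamma\tau = \tau+1$ and $\phi(\tau)=1$, so it suffices to show $\Theta_L(\tau+1,\psi,p) = \rho_L(\widetilde{T})\Theta_L(\tau,\psi,p)$. Given $\lambda \in L+\mu$, using $Q(\lambda) = Q(\lambda_z) + Q(\lambda_{z^\perp})$ one has
\begin{align*}
Q(\lambda_z)(\tau+1) + Q(\lambda_{z^\perp})(\overline{\tau}+1) = Q(\lambda_z)\tau + Q(\lambda_{z^\perp})\overline{\tau} + Q(\lambda),
\end{align*}
and $Q(\lambda) \equiv Q(\mu) \pmod{\Z}$ because $L$ is even. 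Pulling the common factor $e(Q(\mu))$ out of the $\mu$-th coset sum matches the definition of $\rho_L(\widetilde{T})$ exactly.

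The substantive step is the inversion $\widetilde{S}$, i.e.\ verifying
\begin{align*}
\Theta_L(-1/\tau,\psi,p) = \phi(\tau)^{r+2d^+-s-2d^-} \rho_L(\widetilde{S})\,\Theta_L(\tau,\psi,p).
\end{align*}
I would proceed via Poisson summation on each coset $L+\mu$ inside $L \otimes \R$. Splitting $\psi(\lambda) = (\psi(\lambda)_+,\psi(\lambda)_-) \in \R^{r,0}\oplus\R^{0,s}$, the summand of $\Theta_L$ takes the form $p_r(\psi(\lambda)_+) p_s(\psi(\lambda)_-)\,e\bigl(\tau |\psi(\lambda)_+|^2 - \overline{\tau}\,|\psi(\lambda)_-|^2\bigr)$, a product of two Gaussian factors with spherical homogeneous polynomial weights. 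Hecke's identity for spherical harmonics states that the Fourier transform of such a polynomial times a Gaussian $e^{-\pi\alpha|x|^2}$ on $\R^n$ is $i^{-\deg p}\,\alpha^{-n/2 - \deg p}$ times the same polynomial times $e^{-\pi|y|^2/\alpha}$; applying it on $\R^{r,0}$ with parameter proportional to $-i\tau$ contributes $\tau^{-r/2-d^+}$ (times a root of unity), and on $\R^{0,s}$ with parameter proportional to $i\overline{\tau}$ contributes $(-\overline{\tau})^{-s/2-d^-}$. Combining this with the transformation $v \mapsto v/|\tau|^2$ of the explicit $v^{s/2+d^-}$ prefactor and rewriting $|\tau|^2 = \tau\overline{\tau}$ produces precisely the automorphy factor $\phi(\tau)^{r+2d^+-s-2d^-}$. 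Finally, Poisson duality replaces the coset sum over $L+\mu$ by a sum over $L'$ weighted by characters $e(-(\nu,\mu)_Q)$, which assembled over $\mu \in L'/L$ is the Weil-representation matrix coefficient $\rho_L(\widetilde{S})$.

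The main obstacle is the bookkeeping of the branches of the square root in $\phi(\tau)$ and of the eighth root of unity $e((s-r)/8)$ in $\rho_L(\widetilde{S})$. Both originate from fixing a consistent convention for the Fourier transform on an indefinite inner product space and for the Gauss sums on the finite quadratic module $L'/L$; keeping these compatible is exactly the content of the careful analysis in Borcherds' Theorem~4.1, to which one can appeal once the polynomial-free and rank-one cases have been stitched together as above.
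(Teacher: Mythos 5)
Your outline is correct, and it is essentially the argument behind the result the paper itself merely cites (H\"ovel's Satz 1.55, which in turn rests on Borcherds' Theorem 4.1): check the law on the generators $\widetilde{T}$ and $\widetilde{S}$, with the $\widetilde{S}$ case handled by Poisson summation on each coset $L+\mu$ together with Hecke's identity for spherical polynomials times Gaussians on the positive- and negative-definite factors, the $v^{s/2+d^-}$ prefactor supplying the remaining piece of the automorphy factor and the dual-lattice character sum assembling into $\rho_L(\widetilde{S})$. The only caveat is that your closing appeal to Borcherds for the square-root branch and the eighth root of unity means the write-up is not fully self-contained on exactly the one delicate point, but as a proof strategy it coincides with the cited source.
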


Thus, we define
\begin{align*}
k \coloneqq \frac{r-s}{2} + d^+-d^-.
\end{align*}
The following terminology is borrowed from \cite{BS}.
\begin{definition}
An element $w \in \Gr(L)$ is called a special point if it is defined over $\Q$ that is $w \in L \otimes \Q$.
\end{definition}

We observe that if $w$ is a special point, then $w^\perp$ is a special point as well. This yields the splitting
\begin{align*}
L \otimes \Q = w \oplus w^\perp,
\end{align*}
which in turn yields the positive and negative definite lattices
\begin{align*}
P \coloneqq L \cap w, \qquad N \coloneqq L \cap w^\perp.
\end{align*}
Clearly, $P \oplus N$ is a sublattice of $L$ of finite index, and according to Lemma \ref{lem:restrace}, the theta functions associated to both lattices are related by
\begin{align*}
\Theta_L = (\Theta_{P \oplus N})^L.
\end{align*}
We identify $\C[(P\oplus N)'\slash (P\oplus N)]$ with $\C[P'\slash P] \otimes \C[N' \slash N]$, and let $\psi_P$, $\psi_N$ be the restrictions of $\psi$ onto $P$, $N$ respectively. Consequently, we have the splitting
\begin{align*}
\Theta_{P\oplus N}(\tau,\psi,p) = \Theta_P(\tau,\psi_P,p_r) \otimes v^{\frac{s}{2}+d^-}\overline{\Theta_{N^-}(\tau,\psi_N,p_s)}
\end{align*}
at a special point $w$, which can be verified straightforwardly. Furthermore, we observe that $\Theta_P(\tau,\psi_P,p_r)$ is holomorphic and of weight $\frac{r}{2}+d^+$ as a function of $\tau$, while $v^{\frac{s}{2}+d^-}\overline{\Theta_{N^-}(\tau,\psi_N,p_s)}$ is of weight $-\frac{s}{2}-d^-$ with respect to $\tau$.

\subsection{Serre duality}
The following result can be found in \cite[Proposition 2.5]{LiSch} for instance.

\begin{proposition}[Serre duality]\label{Prop: Serre}
Let $L$ be an even lattice, and $\kappa \in \frac{1}{2}\Z$. Assume that
\begin{align*}
g(\tau) = \sum_{h \in L'\slash L}\sum_{n \geq 0}c_g(h,n)e(n\tau)\e_h
\end{align*}
is bounded at the cusp $i\infty$. Then $g$ is a holomorphic modular form of weight $\kappa$ for the Weil representation $\rho_L$ if and only if we have
\begin{align*}
\sum_{h \in L'\slash L}\sum_{n \geq 0}c_g(h,n)c_f(h,-n) = 0
\end{align*}
for every weakly holomorphic modular form $f$ of weight $2-\kappa$ for $\overline{\rho}_L$.
\end{proposition}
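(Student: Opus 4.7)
The plan has two directions of markedly different difficulty.

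\textbf{Forward direction (easy):} Assume $g \in M_{\kappa,L}$. For any $f \in M^!_{2-\kappa,L^-}$, the bilinear pairing $\langle f,g\rangle$ is scalar-valued and transforms with weight $(2-\kappa)+\kappa = 2$ under $\widetilde{\Gamma}$, since $\langle\cdot,\cdot\rangle$ intertwines $\rho_L$ and $\overline{\rho}_L$. Hence $\omega := \langle f,g\rangle\,d\tau$ is a meromorphic 1-form on the compactified modular curve $\overline{\widetilde{\Gamma}\backslash\H}$, holomorphic on $\H$ with a possible pole only at the cusp $i\infty$. The residue theorem forces $\mathrm{Res}_{i\infty}\omega = 0$. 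Computing $\langle f,g\rangle = \sum_h f_h g_h$ and using $d\tau = \tfrac{dq}{2\pi i q}$, the residue at $i\infty$ is $\tfrac{1}{2\pi i}$ times the constant term of the product, which equals $\sum_{h,n\geq 0} c_g(h,n)c_f(h,-n)$, so this sum vanishes.

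\textbf{Converse direction (the real content):} Suppose the formal series $g$ is bounded at $i\infty$ and satisfies the coefficient orthogonality for every $f\in M^!_{2-\kappa,L^-}$. My plan is to test the orthogonality against the weakly holomorphic Poincar\'e series $\Fb_{h,-m,2-\kappa}$ recalled in Section \ref{Sec: MP}, whose principal part is $q^{-m}\e_h$ and whose non-polar Fourier coefficients are computable in terms of Kloosterman sums. Plugging each such $\Fb_{h,-m,2-\kappa}$ into the hypothesis produces a linear identity expressing $c_g(h,m)$ as a fixed (Poincar\'e-coefficient-weighted) combination of the coefficients $c_g(h',n)$ with $n>0$; in particular, the Fourier coefficients of $g$ are forced to coincide with those of a modular form built from Poincar\'e series. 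A parallel dimension count then closes the argument: the space of formal $q$-expansions (bounded at $i\infty$, transforming with $\rho_L$-type indexing) that lie in the annihilator of the principal-part pairing with $M^!_{2-\kappa,L^-}$ has dimension bounded above by $\dim M_{\kappa,L}$ (via a Riemann--Roch computation on the compactified modular curve for the line bundle of weight-$\kappa$ modular forms twisted by $\rho_L$, using the metaplectic cover); since $M_{\kappa,L}$ already embeds into this annihilator by the forward direction, equality holds and $g\in M_{\kappa,L}$.

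\textbf{Main obstacle.} The residue-theorem half is essentially a formal manipulation; the entire substance of the proposition lies in the converse. The genuine difficulty is twofold: one must (i) verify that the family $\{\Fb_{h,-m,2-\kappa}\}$ is rich enough to probe every Fourier coefficient of $g$ --- equivalently, that the map $f\mapsto\{c_f(h,-n)\}$ from $M^!_{2-\kappa,L^-}$ to truncated polynomial spaces has cokernel of the right dimension, which is precisely the statement that cusp forms $S_{\kappa,L}$ obstruct realizing principal parts; and (ii) justify the dimension count matching this cokernel with $\dim M_{\kappa,L}$ in the vector-valued, half-integer-weight setting with the Weil representation. Both steps are standard in the Borcherds/Bruinier framework but require some care with the metaplectic cover and the twisting by $\rho_L$; invoking \cite{LiSch}*{Proposition 2.5}, as the authors do, is the cleanest way to avoid re-deriving Riemann--Roch in this generality.
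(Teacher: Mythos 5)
The paper offers no internal proof of this proposition: it is imported verbatim from \cite{LiSch}*{Proposition 2.5} (ultimately Borcherds \cite{bor2}*{Theorem 3.1}), so there is nothing to compare your argument against except the literature. Your forward direction is correct and is the standard one: since $\overline{\rho}_L$ is dual to $\rho_L$, the pairing $\langle f,g\rangle$ is a scalar-valued weight-$2$ invariant, and integrating $\langle f,g\rangle\,d\tau$ over the boundary of the (truncated) fundamental domain kills all side contributions and leaves the constant term of the product, which is exactly the displayed sum.

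The converse, which you correctly identify as the entire content, has a concrete gap as sketched. First, the Poincar\'e series $\Fb_{\mu,m,\kappa'}$ of Section \ref{Sec: MP} converge only for $\kappa'>2$, whereas you need weakly holomorphic forms of weight $2-\kappa$; in the regime where the paper applies the proposition one has $\kappa=2j+2-k>2$, so $2-\kappa<0$ and no such convergent series exists. The natural substitutes in negative weight are the Maa{\ss}--Poincar\'e series $F_{\mu,m,2-\kappa,\sfr}$, which are merely harmonic; whether a given principal part is realized by a genuinely weakly holomorphic form is obstructed exactly by cusp forms of weight $\kappa$, i.e.\ by the duality you are trying to establish, so ``test against $\Fb_{h,-m,2-\kappa}$'' cannot be executed without circularity. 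Second, the identity you would extract is misstated: since $g$ is supported on $n\ge 0$, the pairing sees $c_f(h,-n)$ only for $n\ge 0$, i.e.\ the principal part and the constant term of $f$; testing against a form with principal part $q^{-m}\e_h$ therefore expresses $c_g(h,m)$ in terms of the constant terms $c_g(h',0)$ weighted by $c_f(h',0)$, not in terms of coefficients $c_g(h',n)$ with $n>0$, and in particular it cannot by itself reconstruct $g$ from finitely many data. The genuine proof of the converse is the cohomological Serre duality on the compactified metaplectic modular curve producing a perfect pairing between $M_{\kappa,L}$ and the space of singular parts modulo those realized by $M^!_{2-\kappa,L^-}$; your closing Riemann--Roch remark gestures at this, but since that is precisely the substance of the proposition, your sketch does not amount to an independent proof and in effect reduces, as the paper does, to citing \cite{LiSch}.
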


\section{The theta lift}\label{Sec: lift}

We consider the theta lift $\Psi^{\reg}_j(f,z)$ and evaluate it in two different ways. Using Serre duality goes back to Borcherds \cite{bor2}.

\subsection{Evaluation in terms of ${}_2F_1$}

We begin by evaluating the higher modified lift as a series involving Gauss hypergeometric functions as follows.

\subsubsection{Evaluating the theta lift of Maa{\ss}--Poincar\'{e} series for general spectral parameters}
Let $\sfr \in \C$ be such that
\begin{align*}
F_{m, \kappa,\sfr}(\tau) \coloneqq \sum_{\mu \in L' \slash L} F_{\mu, m, \kappa,\sfr}(\tau)
\end{align*}
converges absolutely, that is $\re(\sfr) > 1-\frac{\kappa}{2}$.

\begin{theorem} \label{Theorem: theta lift of MPseries}
We have
\begin{multline*}
\Psi^{\reg}_j \left(F_{m, k-2j,\sfr}, z\right) = (4\pi m)^{j+1-k-\frac{s}{2}-d^-}\frac{\Gamma\left(\sfr+\frac{k}{2}\right)\Gamma\left(\frac{k+s}{2}+d^--1+\sfr\right)}{2 \Gamma(2-k+2j)\Gamma\left(\sfr+\frac{k}{2}-j\right)} \\
\times \sum_{\mu \in L'\slash L} \sum_{\substack{\lambda \in L+\mu \\ Q(\lambda)=-m}} \overline{p(\psi(\lambda))} \left(\frac{Q(\lambda)}{Q\left(\lambda_{z^{\perp}}\right)}\right)^{\frac{k+s}{2}+d^--1+\sfr} {}_2F_1\left(k+\sfr, \frac{k+s}{2}+d^--1+\sfr; 2\sfr; \frac{Q(\lambda)}{Q\left(\lambda_{z^{\perp}}\right)}\right).
\end{multline*}
\end{theorem}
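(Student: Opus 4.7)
The plan is to use the Rankin--Selberg unfolding technique for Poincar\'e series in theta lifts, as in the Lorentzian analogues of \cite{BS, Ma}, adapted to signature $(r,s)$ and a spherical polynomial $p$. First, I would pull the iterated raising operator off the Poincar\'e series using Lemma~\ref{lem:mpraise}:
\begin{align*}
R_{k-2j}^{j}\bigl(F_{m,k-2j,\sfr}\bigr) = (4\pi m)^{j}\,\frac{\Gamma(\sfr+\tfrac{k}{2})}{\Gamma(\sfr+\tfrac{k}{2}-j)}\, F_{m,k,\sfr}.
\end{align*}
This immediately produces the factor $(4\pi m)^{j}\,\Gamma(\sfr+k/2)/\Gamma(\sfr+k/2-j)$ in the claimed formula and reduces the task to evaluating $\int_{\Fc}^{\reg} \langle F_{m,k,\sfr}, \overline{\Theta_L(\tau,\psi,p)}\rangle\, v^k\, d\mu(\tau)$.

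Next, I would apply the standard unfolding: since the integrand is $\widetilde{\Gamma}$-invariant and $F_{m,k,\sfr}$ is the Poincar\'e average over $\widetilde{\Gamma}_\infty \backslash \widetilde{\Gamma}$ of the seed $\frac{1}{2\Gamma(2\sfr)}\,\Mc_{k,\sfr}(4\pi m v)\, e(-mu)\sum_{\mu}\e_\mu$, the regularised integral over $\Fc$ collapses to an integral of the seed paired against $\overline{\Theta_L}$ over the strip $[0,1)\times(0,\infty)$. Expanding $\overline{\Theta_L}$ and reading off the $u$-dependence, the relevant exponential is $e((-m-Q(\lambda))u)$, so the $u$-integration kills all terms except those with $Q(\lambda)=-m$. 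This produces the sum $\sum_{\mu}\sum_{\lambda\in L+\mu,\, Q(\lambda)=-m}\overline{p(\psi(\lambda))}$ in the statement, together with a remaining $v$-integral
\begin{align*}
\int_0^\infty \Mc_{k,\sfr}(4\pi m v)\, e^{-2\pi v(Q(\lambda_z)-Q(\lambda_{z^\perp}))}\, v^{k+\frac{s}{2}+d^--2}\, dv.
\end{align*}

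Finally, after substituting $t=4\pi m v$ and unpacking $\Mc_{k,\sfr}(t) = t^{-k/2} M_{k/2,\sfr-\frac{1}{2}}(t)$, this becomes a classical Laplace transform of an $M$-Whittaker function, tabulated in \cite{nist}*{\S13.23}, with closed form given by a product of a gamma factor, a rational power, and a Gauss hypergeometric function. Invoking the identity $Q(\lambda_z)-Q(\lambda_{z^\perp}) = -m - 2Q(\lambda_{z^\perp})$, valid on the support $Q(\lambda)=-m$, simplifies both the argument of the ${}_2F_1$ and the exponent of the rational prefactor to the ratio $Q(\lambda)/Q(\lambda_{z^\perp})$ raised to $\sfr+(k+s)/2+d^--1$. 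Collecting all constants then yields the claimed identity. The main technical obstacle will be the precise bookkeeping needed to reconcile the gamma factors and hypergeometric parameters produced directly by the Whittaker transform with the shape stated in the theorem, in particular matching the first upper parameter $k+\sfr$ and the denominator $\Gamma(2-k+2j)$ to those coming out of the tabulated formula; this is a matter of applying an Euler/Pfaff transformation of ${}_2F_1$ (exploiting its symmetry in the first two upper arguments) together with a careful rewriting of the Poincar\'e normalisation constant.
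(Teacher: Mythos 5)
Your proposal follows essentially the same route as the paper's proof: apply Lemma \ref{lem:mpraise} to strip off the iterated raising operator, unfold against the theta kernel, let the $u$-integral select the terms with $Q(\lambda)=-m$, and evaluate the remaining $v$-integral as the Laplace transform of the $M$-Whittaker function via \cite{nist}*{13.23.1}, finally rewriting the argument using $Q(\lambda_z)+Q(\lambda_{z^\perp})=-m$. The only point to watch is the sign convention in the first Whittaker index (the paper's computation uses $M_{-k/2,\sfr-1/2}$, consistent with Bruinier's normalisation), but this is exactly the kind of bookkeeping you already flag, and it does not change the structure of the argument.
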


\begin{remark}
Choosing the homogeneous polynomial in the theta kernel function to be the constant function $1$ and computing the action of $R_{k-2j}^j$ on $F_{m,k-2j,\sfr}$ by Lemma \ref{lem:mpraise}, this result becomes \cite[Theorem 2.14]{bruinier2004borcherds}.
\end{remark}

\begin{proof}
We summarize the argument from \cite[Theorem 2.14]{bruinier2004borcherds} for convenience of the reader. We need to evaluate
\begin{align*}
\Psi^{\reg}_j \left(F_{m, k-2j,\sfr}, z\right) = \int_{\Fc}^{\reg} \left\langle R_{k-2j}^{j}(F_{m, k-2j,\sfr})(\tau) , \overline{\Theta_L(\tau,\psi,p)} \right\rangle v^{k} d\mu(\tau)
\end{align*}
Consequently, we compute the action of the raising operator first, and have
\begin{align*}
\Psi^{\reg}_j \left(F_{m, k-2j,\sfr}, z\right) = (4\pi m)^j \frac{\Gamma\left(\sfr+\frac{k}{2}\right)}{\Gamma\left(\sfr+\frac{k}{2}-j\right)} \int_{\Fc}^{\reg} \left\langle (F_{m, k,\sfr})(\tau) , \overline{\Theta_L(\tau,\psi,p)} \right\rangle v^{k} d\mu(\tau)
\end{align*}
by Lemma \ref{lem:mpraise}. Secondly, we insert the definitions of both functions, and unfold the integral, obtaining
\begin{multline*}
\Psi^{\reg}_j \left(F_{m, k-2j,\sfr}, z\right) = \frac{(4\pi m)^j\Gamma\left(\sfr+\frac{k}{2}\right)}{2 \Gamma(2-k+2j)\Gamma\left(\sfr+\frac{k}{2}-j\right)} \sum_{\mu \in L'\slash L} \sum_{\lambda \in L+\mu} \overline{p(\psi(\lambda))} \\
\times \int_0^1 \int_0^{\infty} (4\pi m v)^{-\frac{k}{2}} M_{-\frac{k}{2},\sfr-\frac{1}{2}}(4\pi m v) e(-mu) \overline{e\left( Q(\lambda_z)\tau + Q(\lambda_{z^\perp})\overline{\tau} \right)} v^{\frac{s}{2}+d^-+k-2} dv du.
\end{multline*}
Third, we compute the integral over $u$ using that $\overline{e(w)} = e(-\overline{w})$, and that
\begin{align*}
\int_0^1 e(-mu) e\left(-Q(\lambda_z)u - Q(\lambda_{z^\perp})u \right) du = \begin{cases}
1 & \text{ if } Q(\lambda_z) + Q(\lambda_{z^{\perp}}) =-m, \\
0 & \text{ else}.
\end{cases}
\end{align*}
Hence, we obtain
\begin{multline*}
\Psi^{\reg}_j \left(F_{m, k-2j,\sfr}, z\right) = \frac{(4\pi m)^{j-\frac{k}{2}}\Gamma\left(\sfr+\frac{k}{2}\right)}{2 \Gamma(2-k+2j)\Gamma\left(\sfr+\frac{k}{2}-j\right)} \sum_{\mu \in L'\slash L} \sum_{\substack{\lambda \in L+\mu \\ Q(\lambda)=-m}} \overline{p(\psi(\lambda))} \\ \times \int_0^{\infty} M_{-\frac{k}{2},\sfr-\frac{1}{2}}(4\pi m v) e^{-2\pi v \left(Q(\lambda_z) - Q(\lambda_{z^\perp})\right)} v^{\frac{s+k}{2}+d^--2} dv.
\end{multline*}
The integral is a Laplace transform. Using that $\frac{m}{2m}+\frac{Q(\lambda_z) - Q(\lambda_{z^\perp})}{2m} =  \frac{Q\left(\lambda_{z^{\perp}}\right)}{Q(\lambda)}$ along with \cite{nist}*{item 13.23.1}, it evaluates
\begin{align*}
&\int_0^{\infty} M_{-\frac{k}{2},\sfr-\frac{1}{2}}(4\pi m v) e^{-2\pi v \left(Q(\lambda_z) - Q(\lambda_{z^\perp}) \right)} v^{\frac{k+s}{2}+d^--2} dv \\
&= \frac{(4\pi m)^{1-\frac{k+s}{2}-d^-} \Gamma\left(\frac{k+s}{2}+d^--1+\sfr\right)}{\left(\frac{Q(\lambda_z) - Q(\lambda_{z^\perp})}{2m} + \frac{1}{2}\right)^{\frac{k+s}{2}+d^--1+\sfr}} {}_2F_1\left(k+\sfr, \frac{k+s}{2}+d^--1+\sfr; 2\sfr; \frac{1}{\frac{1}{2}+\frac{Q(\lambda_z) - Q(\lambda_{z^\perp})}{2m}}\right).
\end{align*}
We recall $Q(\lambda) = Q(\lambda_z)+Q(\lambda_{z^\perp}) = -m$, and rewrite the argument of the hypergeometric function to
\begin{align*}
\frac{m}{2m}+\frac{Q(\lambda_z) - Q(\lambda_{z^\perp})}{2m} = \frac{Q\left(\lambda_{z^{\perp}}\right)}{Q(\lambda)}.
\end{align*}
Thus, we arrive at
\begin{multline*}
\Psi^{\reg}_j \left(F_{m, k-2j,\sfr}, z\right) = (4\pi m)^{j+1-k-\frac{s}{2}-d^-}\frac{\Gamma\left(\sfr+\frac{k}{2}\right)\Gamma\left(\frac{k+s}{2}+d^--1+\sfr\right)}{2 \Gamma(2-k+2j)\Gamma\left(\sfr+\frac{k}{2}-j\right)} \\
\times \sum_{\mu \in L'\slash L} \sum_{\substack{\lambda \in L+\mu \\ Q(\lambda)=-m}} \overline{p(\psi(\lambda))} \left(\frac{Q(\lambda)}{Q\left(\lambda_{z^{\perp}}\right)}\right)^{\frac{k+s}{2}+d^--1+\sfr} {}_2F_1\left(k+\sfr, \frac{k+s}{2}+d^--1+\sfr; 2\sfr; \frac{Q(\lambda)}{Q\left(\lambda_{z^{\perp}}\right)}\right),
\end{multline*}
as claimed.
\end{proof}

Combining the previous result with Lemma \ref{lem:fdecomp} yields the following consequence.
\begin{corollary}\label{cor: theta lift as 2F1}
Let $j \in \N_0$, and $f \in H_{k-2j,L}$. Assume that $k-2j < 0$. Then we have
\begin{multline*}
\Psi^{\reg}_j \left(f, z\right) = \frac{(4\pi)^{j+1-k-\frac{s}{2}-d^-} j! \ \Gamma\left(\frac{s}{2}+d^-+j\right)}{4\Gamma(2-k+2j)} \sum_{\substack{\lambda \in L' \\ Q(\lambda) <0}} c_f^+(\lambda,Q(\lambda))  \overline{p(\psi(\lambda))} \\
\times \frac{\vt{Q(\lambda)}^{2j+1-k}}{\vt{Q(\lambda_{z^\perp})}^{\frac{s}{2}+j+d^-}} {}_2F_1\left(1+j, \frac{s}{2}+d^-+j;2-k+2j;\frac{Q(\lambda)}{Q(\lambda_{z^\perp})}\right).
\end{multline*}
\end{corollary}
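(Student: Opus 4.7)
The plan is to combine Theorem \ref{Theorem: theta lift of MPseries} with the Poincar\'{e} series decomposition of Lemma \ref{lem:fdecomp}. Since $k-2j$ is a negative half-integer, that lemma gives
\[
f = \frac{1}{2}\sum_{\mu \in L'/L}\sum_{m > 0} c_f^+(\mu,-m)\, F_{\mu,m,k-2j,\sfr_0}, \qquad \sfr_0 \coloneqq 1 - \frac{k-2j}{2} = 1 + j - \frac{k}{2}.
\]
Since $\Psi^{\reg}_j(\cdot, z)$ is linear in its first argument, the task reduces to substituting $\sfr = \sfr_0$ into Theorem \ref{Theorem: theta lift of MPseries}, multiplying by $c_f^+(\mu,-m)/2$, and summing over $\mu$ and $m$.

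The second step is the algebraic bookkeeping at $\sfr = \sfr_0$. I would use $\sfr_0 + \tfrac{k}{2} = 1+j$ to read off $\Gamma(\sfr_0+\tfrac{k}{2}) = j!$ and $\Gamma(\sfr_0+\tfrac{k}{2}-j) = \Gamma(1) = 1$; similarly $\tfrac{k+s}{2}+d^--1+\sfr_0 = \tfrac{s}{2}+d^-+j$ and $2\sfr_0 = 2-k+2j$, so the hypergeometric parameters in Theorem \ref{Theorem: theta lift of MPseries} collapse to $(1+j,\, \tfrac{s}{2}+d^-+j;\, 2-k+2j)$ as in the corollary. Combining the $\tfrac{1}{2}$ supplied by Lemma \ref{lem:fdecomp} with the $\tfrac{1}{2}$ already present in Theorem \ref{Theorem: theta lift of MPseries} produces the overall prefactor $\tfrac{(4\pi)^{j+1-k-s/2-d^-}\,j!\,\Gamma(\tfrac{s}{2}+d^-+j)}{4\,\Gamma(2-k+2j)}$.

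To finish, I would re-index the double sum over $(\mu,m)$ as a single sum over $\lambda \in L'$ with $Q(\lambda) < 0$ via $m = -Q(\lambda) = \vt{Q(\lambda)}$ and $\mu = \lambda + L$, which gives $c_f^+(\mu,-m) = c_f^+(\lambda, Q(\lambda))$. Because $z^\perp$ is negative definite, $Q(\lambda_{z^\perp}) \le 0$, so $Q(\lambda)/Q(\lambda_{z^\perp}) > 0$ and the combined weight factor collapses as
\[
\vt{Q(\lambda)}^{j+1-k-\frac{s}{2}-d^-}\left(\frac{Q(\lambda)}{Q(\lambda_{z^\perp})}\right)^{\frac{s}{2}+d^-+j} = \frac{\vt{Q(\lambda)}^{2j+1-k}}{\vt{Q(\lambda_{z^\perp})}^{\frac{s}{2}+j+d^-}},
\]
producing the stated shape. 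The one genuinely delicate point I anticipate is that $\sfr_0$ sits exactly on the boundary $\re(\sfr) > 1-\tfrac{k-2j}{2}$ of the absolute convergence region underlying the unfolding in Theorem \ref{Theorem: theta lift of MPseries}. I would resolve this by reading that theorem as an identity of meromorphic functions of $\sfr$ and extending to $\sfr_0$ by analytic continuation: the right-hand side is manifestly holomorphic in $\sfr$ near $\sfr_0$ through the hypergeometric function, while on the left $F_{\mu,m,k-2j,\sfr_0}$ is the harmonic specialization discussed in Section \ref{Sec: MP}, so the regularized lift is already well-defined there. The interchange of the outer sum with the regularized integral only requires a mild growth estimate controlled by the polynomial principal part of $f$, which is routine and parallels the argument in \cite{ANBMS}.
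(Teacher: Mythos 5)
Your proposal is correct and is essentially the paper's own proof, only written out in more detail: decompose $f$ via Lemma \ref{lem:fdecomp}, use linearity of the lift, and insert $\sfr_0 = 1-\frac{k-2j}{2}$ into Theorem \ref{Theorem: theta lift of MPseries}; your attention to the fact that $\sfr_0$ sits on the boundary of the stated convergence region (handled by analytic continuation, or by noting the harmonic Poincar\'{e} series still converges absolutely for negative weight) is a point the paper passes over in silence. The one thing to flag explicitly rather than gloss over is that, taken literally, the first hypergeometric parameter of Theorem \ref{Theorem: theta lift of MPseries} specializes to $k+\sfr_0 = 1+\frac{k}{2}+j$ rather than the $1+j$ you assert: the corollary's value $1+j$ is the correct one, since the Laplace transform \cite{nist}*{item 13.23.1} applied to $M_{-\frac{k}{2},\sfr-\frac{1}{2}}$ produces the parameter $\frac{k}{2}+\sfr$ (not $k+\sfr$), so your ``collapse'' of the parameters tacitly corrects a typo in the theorem rather than following from its stated form.
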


\begin{proof}
Since the weight of $f$ is negative, we have
\begin{align*}
f(\tau)= \frac{1}{2} \sum_{h \in L'\slash L} \sum_{m \geq 0} c_f^+(h,-m) F_{h,m,k-2j,1-\frac{k}{2}+j}(\tau)
\end{align*}
according to Lemma \ref{lem:fdecomp}, and we observe that the term corresponding to $m=0$ will vanish due to $c_f^+(h,0) = 0$ by our more restrictive growth condition on Maa{\ss} forms. Consequently, we have
\begin{align*}
\Psi^{\reg}_j \left(f, z\right) = \frac{1}{2} \sum_{\mu \in L'\slash L} \sum_{m > 0} c_f^+(\mu,-m) \Psi^{\reg}_j \left(F_{\mu,m,k-2j,1-\frac{k}{2}+j}, z\right).
\end{align*}
We insert the spectral parameter $\sfr = 1-\frac{k-2j}{2}$ into Theorem \ref{Theorem: theta lift of MPseries}, which yields the claim.
\end{proof}

\subsection{Evaluation in terms of the constant term in a Fourier expansion}
Next we determine the lift as a constant term in a Fourier expansion plus a certain boundary integral that vanishes for a certain class of input function.
\begin{theorem}\label{Theorem: theta lift at CM points}
Let $f \in H_{k-2j,L}$ and $w$ be a special point, and $\Gc_P^+$ be the holomorphic part of a preimage of $\Theta_P$ under $\xi_{2-(\frac{r}{2}+d^+)}$. Then we have
	\begin{multline*}	
\Psi^{\reg}_j \left(f, w\right) =  \frac{j! (4\pi)^j \Gamma\left(2-\frac{r}{2}-d^+\right)}{\Gamma\left(2-\frac{r}{2}-d^++j\right)} \left( \CT\left(\left\langle f_{P\oplus N}(\tau),  \left[\Gc_P^+(\tau),\Theta_{N^-}(\tau)\right]_j  \right\rangle \right) \right. \\ \left. - \int_{\Fc}^{\reg} \left\langle L_{k-2j} \left(f_{P\oplus N}\right)(\tau), \left[\Gc_P^+(\tau),\Theta_{N^-}(\tau)\right]_j  \right\rangle v^{-2} d\tau\right).
	\end{multline*}
	\end{theorem}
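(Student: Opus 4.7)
The plan is to compute the lift at $w$ by a Stokes' / integration-by-parts argument that converts the regularized integral over $\Fc$ into a boundary contribution at the cusp (producing the constant term) plus an interior remainder involving the lowering operator $L_{k-2j}f_{P \oplus N}$.

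First I would apply Lemma \ref{lem:restrace} to the finite-index sublattice $K = P\oplus N \subseteq L$. Since $\Theta_L = \Theta_{P\oplus N}^L$ and the raising operator $R_{k-2j}^j$ acts only in $\tau$ (hence commutes with restriction, which acts on the $\C[L'/L]$-index), this rewrites
\begin{align*}
\Psi^{\reg}_j(f,w) = \int_\Fc^\reg \left\langle R_{k-2j}^j f_{P\oplus N}, \overline{\Theta_{P\oplus N}}\right\rangle v^k\, d\mu(\tau).
\end{align*}
At the special point the splitting $\overline{\Theta_{P\oplus N}} = \overline{\Theta_P}\otimes v^{s/2+d^-}\Theta_{N^-}$ of Section \ref{Sec: theta functions} applies. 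Fixing a harmonic Maa{\ss} preimage $\Gc_P = \Gc_P^+ + \Gc_P^-$ of $\Theta_P$ under $\xi_{\kappa_P}$ with $\kappa_P := 2-(\tfrac{r}{2}+d^+)$ gives $\overline{\Theta_P} = -2iv^{\kappa_P}\partial_{\overline{\tau}}\Gc_P$. Combining this with the weight identity $k + s/2 + d^- + \kappa_P = 2$ and the holomorphicity of $\Theta_{N^-}$ collapses the integrand to
\begin{align*}
-2i\left\langle R_{k-2j}^j f_{P\oplus N}, \partial_{\overline{\tau}}(\Gc_P\otimes\Theta_{N^-})\right\rangle du\,dv.
\end{align*}

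Next I would iterate integration by parts $j$ times, at each stage using Lemma \ref{Lem: tech} to pass a raising operator through the antiholomorphic factor $v^{2-\kappa_P}\overline{\Theta_P}$, and Lemma \ref{Lem: RC Brackets} (with $L\Theta_{N^-}=0$ and $L\Gc_P = v^{2-\kappa_P}\overline{\Theta_P}$) to recognize the output as being proportional to $L[\Gc_P,\Theta_{N^-}]_j$. The explicit coefficient $\tfrac{j!(4\pi)^j\Gamma(\kappa_P)}{\Gamma(\kappa_P+j)}$ emerges from the $\Gamma$-ratios in Lemma \ref{Lem: RC Brackets} together with the $(-1)^j$ produced by the $j$-fold adjoint exchange between $R$ and $L$. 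After this procedure the integrand decomposes as
\begin{align*}
\tfrac{j!(4\pi)^j\Gamma(\kappa_P)}{\Gamma(\kappa_P+j)}\left(-2i\partial_{\overline{\tau}}\left\langle f_{P\oplus N}, [\Gc_P,\Theta_{N^-}]_j\right\rangle du\,dv - \left\langle L_{k-2j}f_{P\oplus N}, [\Gc_P,\Theta_{N^-}]_j\right\rangle d\mu(\tau)\right),
\end{align*}
a total $\overline{\tau}$-derivative plus a lowering-operator remainder.

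Finally, Stokes' theorem applied to the total derivative over the truncation $\Fc \cap \{v \leq T\}$ reduces it to a contour integral along the boundary; periodicity kills the vertical segments, modular $S$-invariance cancels the bottom arc $|\tau|=1$, and only the top segment $v = T$ survives. Taking $T \to \infty$, the sole surviving contribution to the constant term in $u$ is the holomorphic piece $[\Gc_P^+,\Theta_{N^-}]_j$, since $\Gc_P^-$ decays exponentially via its incomplete $\Gamma$-factors. A parallel decomposition argument in the remaining integral (splitting $\Gc_P = \Gc_P^+ + \Gc_P^-$ inside the bracket and showing that the $\Gc_P^-$ contribution either cancels against the discarded boundary data from the truncation or vanishes under regularization) yields the replacement $\Gc_P \leadsto \Gc_P^+$ there as well. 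The main obstacles I anticipate are the careful bookkeeping of the coefficient $\tfrac{j!(4\pi)^j\Gamma(\kappa_P)}{\Gamma(\kappa_P+j)}$ through the iterated integration by parts and the rigorous justification of this last replacement; both rest on the tight interplay between Lemmas \ref{Lem: tech} and \ref{Lem: RC Brackets}.
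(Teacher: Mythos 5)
Your proposal is correct and follows essentially the same route as the paper: restrict to $P\oplus N$, split the theta function at the special point, replace $v^{k+\frac{s}{2}+d^-}\overline{\Theta_P}$ by $L_{2-\frac{r}{2}-d^+}\Gc_P$, move the $j$ raising operators across via adjointness together with Lemma \ref{Lem: tech} and Lemma \ref{Lem: RC Brackets} to recognize $L$ of a Rankin--Cohen bracket, and finish with one application of Stokes' theorem yielding the constant term plus the $L_{k-2j}f_{P\oplus N}$ remainder. The two delicate points you flag (the bookkeeping of $\frac{j!(4\pi)^j\Gamma(2-\frac{r}{2}-d^+)}{\Gamma(2-\frac{r}{2}-d^++j)}$ and the passage from $\Gc_P$ to $\Gc_P^+$) are exactly the ones the paper handles, the latter equally tersely.
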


\begin{remark}
In general, the coefficients of $\Gc_P^+$ are expected to be transcendental. However, in weight $\frac{1}{2}$ and $\frac{3}{2}$ the function $\Gc_P^+$ may be chosen to have rational coefficients - a situation which is expected to also hold for $\xi$-preimages of CM modular forms. It is therefore expected that one obtains rationality (up to powers of $\pi$) of the modified higher lift only in these cases, and stipulating that $f$ is weakly holomorphic meaning that the final integral vanishes.
\end{remark}

By a slight abuse of notation, we write $\Theta_L(\tau,w,p)$ for the theta function evaluated at an isometry $\psi$ that produces a special point $w$.

\begin{proof}[Proof of Theorem \ref{Theorem: theta lift at CM points}]
We restrict to special points $w \in \Gr(L)$. This enables us to write
\begin{align*}
\left\langle R_{k-2j}^{j}(f)(\tau) , \overline{\Theta_L(\tau,w,p)} \right\rangle = \left\langle R_{k-2j}^{j}(f_{P\oplus N})(\tau) , \overline{\Theta_{P\oplus N}(\tau,w,p)} \right\rangle.
\end{align*}
Next, we use that the raising and lowering operator are adjoint to each other (cf. \cite{bruinier2004borcherds}*{Lemma 4.2}), which gives
\begin{align*}
\Psi^{\reg}_j \left(f, w\right) = \int_{\Fc}^{\reg} \left\langle f_{P\oplus N}(\tau) , L_{k}^{j-1}\left(\overline{\Theta_{P\oplus N}(\tau,w,p)}\right)\right\rangle v^{k-2} d\tau.
\end{align*}
We observe that the boundary terms disappear in the same fashion as during the proof of \cite{bruinier2004borcherds}*{Lemma 4.4}. Next, we rewrite
\begin{align*}
\Psi^{\reg}_j \left(f, w\right) = (-1)^j\int_{\Fc}^{\reg} \left\langle f_{P\oplus N}(\tau), R_{-k}^{j}\left(\overline{\Theta_{P\oplus N}(\tau,w,p)}v^{k}\right) \right\rangle v^{-2} d\tau,
\end{align*}
and recall that
\begin{align*}
\Theta_{P\oplus N}(\tau,w,p) = \Theta_P(\tau,p_r) \otimes v^{\frac{s}{2}+d^-}\overline{\Theta_{N^-}(\tau,p_s)} = v^{\frac{s}{2}+d^-}\Theta_P(\tau,p_r) \otimes \overline{\Theta_{N^-}(\tau,p_s)}.
\end{align*}
Consequently, we obtain
\begin{align*}
R_{-k}^{j}\left(\overline{\Theta_{P\oplus N}(\tau,w,p)}v^{k}\right) &= R_{-k}^j \left(v^{k+\frac{s}{2}+d^-}\overline{\Theta_P(\tau,p_r)}\otimes \Theta_{N^-}(\tau,p_s)\right) \\
&= v^{k+\frac{s}{2}+d^-}\overline{\Theta_P(\tau,p_r)} \otimes \left(R_{\frac{s}{2}+d^-}^{j}\Theta_{N^-}(\tau,p_s)\right),
\end{align*}
by Lemma \ref{Lem: tech}. In particular we note that $v^{k+\frac{s}{2}+d^-}\overline{\Theta_P(\tau,p_r)}$ has weight $-k-\frac{s}{2}-d^- = -\frac{r}{2}-d^+$.

We choose a preimage $\Gc_P$ of $\Theta_P(\tau,p_r)$ under $\xi_{2-(\frac{r}{2}+d^+)}$, namely
\begin{align*}
\Theta_P(\tau,p_r) = \xi_{2-\frac{r}{2}-d^+} \Gc_P(\tau) = v^{-\frac{r}{2}-d^+} \overline{L_{2-\frac{r}{2}-d^+}} \Gc_P,
\end{align*}
which yields
\begin{align*}
R_{-k}^{j}\left(\overline{\Theta_{P\oplus N}(\tau,w,p)}v^{k}\right) = L_{2-\frac{r}{2}-d^+} \Gc_P(\tau) \otimes \left(R_{\frac{s}{2}+d^-}^{j}\Theta_{N^-}(\tau,p_s)\right)
\end{align*}
We apply the computation of the Rankin--Cohen brackets given in Lemma \ref{Lem: RC Brackets} noting that $L_{\ell}\Theta_{N^-} = 0$, and that it suffices to deal with the holomorphic part $\Gc_P^+$ of $\Gc_P$ (both by virtue of holomorphicity in computing the Rankin--Cohen bracket). Thus,
\begin{align*}
R_{-k}^{j}\left(\overline{\Theta_{P\oplus N}(\tau,w,p)}v^{k}\right) = \frac{j! (-4\pi)^j \Gamma(2-k)}{\Gamma\left(2-k+j\right)} v^{-\frac{s}{2}-d^-} L_{2-k+\frac{s}{2}+d^-+2j} \left[\Gc_P^+(\tau),\Theta_{N^-}(\tau,p_s)\right]_j.
\end{align*}
Hence, the theta lift becomes
\begin{align*}
\Psi^{\reg}_j \left(f, w\right) = \frac{j! (4\pi)^j \Gamma\left(2-\frac{r}{2}-d^+\right)}{\Gamma\left(2-\frac{r}{2}-d^++j\right)} \int_{\Fc}^{\reg} \left\langle f_{P\oplus N}(\tau) , L_{2-k+2j} \left[\Gc_P^+(\tau),\Theta_{N^-}(\tau,p_s)\right]_j\right\rangle v^{-2} d\tau.
\end{align*}
The last step is to apply Stokes' theorem, compare the proof of \cite{bruinier2004borcherds}*{Lemma 4.2} for example, which yields
\begin{multline*}
\Psi^{\reg}_j \left(f, w\right) = \frac{j! (4\pi)^j \Gamma\left(2-\frac{r}{2}-d^+\right)}{\Gamma\left(2-\frac{r}{2}-d^++j\right)} \left(\lim_{T \rightarrow \infty}  \int_{iT}^{1+iT} \left\langle f_{P\oplus N}(\tau),  \left[\Gc_P^+(\tau),\Theta_{N^-}(\tau,p_s)\right]_j\right\rangle v^{-2} d\tau \right. \\ \left. - \int_{\Fc}^{\reg} \left\langle L_{k-2j} \left(f_{P\oplus N}\right)(\tau), \left[\Gc_P^+(\tau),\Theta_{N^-}(\tau,p_s)\right]_j  \right\rangle v^{-2} d\tau\right),
\end{multline*}
utilizing again that boundary terms vanish. We observe that the left integral can be regarded as the Fourier coefficient of index $0$ in the Fourier expansion of the integrand, see the bottom of page $14$ in \cite{BS}. This proves the claim.
\end{proof}

We end this section by noting that to obtain recurrence relations, as in \cite{BS}, one would need to compute the Fourier expansion of the lift. In general, this is a lengthy but straightforward process, and since we do not require it in this paper we omit the details. In essence, one follows the calculations of Borcherds \cite{bor} by using Lemma \ref{lem:mpraise}. A resulting technicality is to then take care of the different spectral parameter. One may overcome this by relating the coefficients of Maa{\ss}-Poincar\'{e} series to those with other spectral parameters, again using the action of the iterated Maa{\ss} raising operator as in Lemma \ref{lem:mpraise}.

\subsection{Eichler--Selberg type relations} \label{sec:ESrelations}
We now prove a refined version of Theorem \ref{Thm: main}. To this end, we define the function
\begin{multline*}
\Lambda_L(\psi,p,j) \coloneqq  \frac{(4\pi)^{1-\frac{r}{2}-d^+} \Gamma\left(\frac{s}{2}+j+d^-\right)\Gamma\left(2-\frac{r}{2}-d^++j\right)}{4\Gamma(2-k+2j)\Gamma\left(2-\frac{r}{2}-d^+\right)} 
\sum_{\substack{m \geq 1 \\ \lambda \in L' \\ Q(\lambda) =-m}} \overline{p(\psi(\lambda))} \frac{\vt{Q(\lambda)}^{2j+1-k}}{\vt{Q(\lambda_{z^\perp})}^{\frac{s}{2}+j+d^-}} \\ \times{}_2F_1\left(1+j, \frac{s}{2}+j+d^-;2-k+2j;\frac{Q(\lambda)}{Q(\lambda_{z^\perp})}\right) q^m
\end{multline*}
for $j>0$. We write 
\begin{align*}
\Gc_P^+(\tau) = \sum_{\mu \in L'\slash L} \sum_{n \gg -\infty} a(n)q^n \e_{\mu},
\end{align*}
and furthermore define
\begin{align*}
\mathscr{G}_P^+(\tau) \coloneqq \Gc_P^+(\tau) - \sum_{\mu \in L'\slash L} \sum_{n < 0} a(n) \Fb_{\mu,n-1,2j+2-k}(\tau).
\end{align*}

Since one may add any weakly holomorphic modular form of appropriate weight for $\rho_L$ to $\Gc_P^+$, Theorem \ref{Thm: main} follows directly from the following result (noting that the linear combination of Maa{\ss} Poincar\'{e} series may change).
\begin{theorem}\label{Thm: ES}
Let $L$ be an even lattice of signature $(r,s)$, let $p$ be as before, and $w$ be a special point defined by the isometry $\psi$. Let $j > 0$ and $k$ be such that $2j+2-k > 2$. Then the function
\begin{align*}
\left[\mathscr{G}_P^+(\tau),\Theta_{N^-}(\tau,p_s)\right]_j^L - \Lambda_L(\psi,p,j)
\end{align*}
is a holomorphic vector-valued modular form of weight $2j+2-k$ for $\rho_L$.
\end{theorem}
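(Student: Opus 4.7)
The plan is to deduce the statement from Serre duality (Proposition \ref{Prop: Serre}) by comparing the two evaluations of $\Psi^{\reg}_j(f,w)$ obtained in the previous subsections against a test weakly holomorphic modular form $f$ of dual weight $k-2j$ for $\overline{\rho}_L$.

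First I would set $f \in M_{k-2j,L}^!$ and use that $L_{k-2j} f = 0$, which kills the regularized integral term in Theorem \ref{Theorem: theta lift at CM points}. What remains is the constant term $\CT(\langle f_{P\oplus N}(\tau), [\Gc_P^+,\Theta_{N^-}]_j\rangle)$ multiplied by the prefactor $\frac{j!(4\pi)^j\Gamma(2-\frac{r}{2}-d^+)}{\Gamma(2-\frac{r}{2}-d^++j)}$. Next I would invoke the adjointness of the restriction and trace maps of Lemma \ref{lem:restrace} to push the pairing up from $P\oplus N$ to $L$, rewriting
\begin{align*}
\Psi^{\reg}_j(f,w) = \frac{j!(4\pi)^j\Gamma(2-\frac{r}{2}-d^+)}{\Gamma(2-\frac{r}{2}-d^++j)}\,\CT\bigl(\bigl\langle f(\tau), [\Gc_P^+,\Theta_{N^-}]_j^L(\tau)\bigr\rangle\bigr).
\end{align*}
In parallel, Corollary \ref{cor: theta lift as 2F1} expresses $\Psi^{\reg}_j(f,w)$ as a sum indexed by $\lambda\in L'$ with $Q(\lambda)<0$, weighted by the principal-part coefficients $c_f^+(\lambda,Q(\lambda))$. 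Regrouping this sum by $m=-Q(\lambda)$ and matching the explicit constant to the prefactor above (using $k = \frac{r-s}{2}+d^+-d^-$ to check $k+\frac{s}{2}+d^-=\frac{r}{2}+d^+$ and collecting gamma factors) yields precisely
\begin{align*}
\Psi^{\reg}_j(f,w) = \frac{j!(4\pi)^j\Gamma(2-\frac{r}{2}-d^+)}{\Gamma(2-\frac{r}{2}-d^++j)}\,\CT\bigl(\bigl\langle f(\tau), \Lambda_L(\psi,p,j)(\tau)\bigr\rangle\bigr).
\end{align*}

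Equating the two evaluations, I obtain for every weakly holomorphic $f$ of weight $k-2j$ for $\rho_L$ that
\begin{align*}
\CT\bigl(\bigl\langle f(\tau), [\Gc_P^+,\Theta_{N^-}]_j^L(\tau) - \Lambda_L(\psi,p,j)(\tau)\bigr\rangle\bigr) = 0.
\end{align*}
This is the orthogonality relation needed for Serre duality, but we cannot apply Proposition \ref{Prop: Serre} directly because $[\Gc_P^+,\Theta_{N^-}]_j^L$ inherits the polar behaviour at $i\infty$ coming from the negative index Fourier coefficients $a(n)$ of $\Gc_P^+$. This is exactly what the modification $\mathscr{G}_P^+$ addresses: each weakly holomorphic Poincar\'e series $\Fb_{\mu,n-1,2j+2-k}^L$ is a holomorphic modular form of weight $2j+2-k$ for $\rho_L$ with normalized principal part $q^n\e_\mu$, so bracketing against $\Theta_{N^-}$ and tracing to $L$ does not alter modularity, and subtracting the appropriate linear combination cancels exactly the principal part of $[\Gc_P^+,\Theta_{N^-}]_j^L$. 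Since $\Lambda_L$ is a cuspidal $q$-series (the sum starts at $m\geq 1$), the modified function $[\mathscr{G}_P^+,\Theta_{N^-}]_j^L - \Lambda_L$ is bounded at $i\infty$.

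Finally I would apply Serre duality: the pairing of $[\mathscr{G}_P^+,\Theta_{N^-}]_j^L - \Lambda_L$ with any $f\in M_{k-2j,L}^!$ equals the pairing of $[\Gc_P^+,\Theta_{N^-}]_j^L - \Lambda_L$ with $f$ shifted by contributions of the weakly holomorphic Poincar\'e series, but those are pairings of holomorphic modular forms with weakly holomorphic forms of dual weight, which vanish by Proposition \ref{Prop: Serre}. Hence the total pairing vanishes, and $[\mathscr{G}_P^+,\Theta_{N^-}]_j^L - \Lambda_L$ is a holomorphic modular form of weight $2j+2-k$ for $\rho_L$, as claimed. The main obstacle I anticipate is bookkeeping: verifying that the constants in Theorem \ref{Theorem: theta lift at CM points} and Corollary \ref{cor: theta lift as 2F1} combine to produce exactly the prefactor defining $\Lambda_L$, and confirming that the chosen Poincar\'e series $\Fb_{\mu,n-1,2j+2-k}^L$ have principal parts matching the negative Fourier indices of $[\Gc_P^+,\Theta_{N^-}]_j^L$, so that the cancellation is clean and nothing survives as a hidden pole.
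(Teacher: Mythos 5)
Your proposal is correct and follows essentially the same route as the paper's proof: take $f$ weakly holomorphic so that $L_{k-2j}f=0$ kills the boundary integral, equate the constant-term evaluation of Theorem \ref{Theorem: theta lift at CM points} with the hypergeometric evaluation of Corollary \ref{cor: theta lift as 2F1}, pass to the level of $L$ via the trace map, observe that the Poincar\'e-series correction defining $\mathscr{G}_P^+$ is itself modular and restores boundedness at $i\infty$, and conclude by Serre duality. One small slip: $\Fb_{\mu,n-1,2j+2-k}$ with $n<0$ is a \emph{weakly} holomorphic modular form (it has a pole at $i\infty$), not a holomorphic one, but this does not affect the argument since only its modularity and its principal part are used.
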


\begin{remarks}
\
\begin{enumerate}
\item This provides the general vector-valued analogue, assuming that the lattice is chosen such that $2j+2-k > 2$, of Mertens' scalar-valued results in weight $\frac{1}{2}$ and $\frac{3}{2}$ \cite{Mer1}. 
\item Note that the slight correction of $\Gc_P^+$ by Poincar{\'e} series was missing in \cite{Ma}. 
\item In certain cases the hypergeometric function may be simplified (for example, the $n=1$ case as in \cites{BS,Ma}, which yields a form very similar to Mertens' scalar-valued result). It appears to be possible that one should be able to prove the same results via holomorphic projection acting on vector-valued modular forms (see \cite{IRR}) in much the same way as Mertens' original scalar valued proofs in \cite{Mer1}.
\end{enumerate}
\end{remarks}

\begin{proof}[Proof of Theorem \ref{Thm: ES}]
Let $f$ be a weakly holomorphic form of weight $k-2j$ with Fourier coefficients $c_f^+$. By construction, the form $\mathscr{G}_P^+$ is holomorphic at $i\infty$, and hence 
\begin{align*}
\CT\left(\left\langle f_{P\oplus N}(\tau),  \left[\mathscr{G}_P^+(\tau),\Theta_{N^-}(\tau,p_s)\right]_j^L  \right\rangle \right)
\end{align*}
contains only the Fourier coefficients of non-positive index of $f$. We note that $L_{k-2j} f = 0$, and subtract the resulting expressions of the lift from Corollary \ref{cor: theta lift as 2F1} and Theorem \ref{Theorem: theta lift at CM points}. We obtain
\begin{multline*}
0 = \CT\left(\left\langle f_{P\oplus N}(\tau),  \left[\mathscr{G}_P^+(\tau),\Theta_{N^-}(\tau,p_s)\right]_j^L  \right\rangle \right) - \frac{(4\pi)^{1-\frac{r}{2}-d^+} \Gamma\left(\frac{s}{2}+j+d^-\right)\Gamma\left(2-\frac{r}{2}-d^++j\right)}{4\Gamma(2-k+2j)\Gamma\left(2-\frac{r}{2}-d^+\right)} \\ 
\times \sum_{\substack{m \geq 1 \\ \lambda \in L' \\ Q(\lambda) =-m}} c_f^+(\lambda,-m) \overline{p(\psi(\lambda))} \frac{\vt{Q(\lambda)}^{2j+1-k}}{\vt{Q(\lambda_{z^\perp})}^{\frac{s}{2}+j+d^-}} {}_2F_1\left(1+j, \frac{s}{2}+j+d^-;2-k+2j;\frac{Q(\lambda)}{Q(\lambda_{z^\perp})}\right).
\end{multline*}
The Rankin--Cohen bracket is bilinear, and a linear combination of vector-valued Poincar{\'e} series is modular itself. We apply Proposition \ref{Prop: Serre} and the claim follows.
\end{proof}

In a similar way to \cite{Mer1}*{Corollary 5.4}, we obtain the following structural corollary by rewriting Theorem \ref{Thm: ES}, keeping the same notation as throughout this paper.

\begin{corollary}
Let $\theta$ denote the space generated by all $\Theta_{N^-}$ functions of weight $\frac{s}{2}+d^-$ for $\rho_{N^-}$. Then the equivalence classes $\Lambda_L(\psi,p,j) + M_{2j+2-k,L}^!$ generate the $\C$-vector space
\begin{align*}
\left[ \Mc^{\text{mock}}_{2j+2-k,P} , \theta \right]_j^L \slash  M_{2j+2-k,L}^!.
\end{align*}
\end{corollary}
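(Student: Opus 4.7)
The strategy is to read the corollary as a structural rephrasing of Theorem \ref{Thm: ES}. The theorem asserts that for every preimage $\mathscr{G}_P^+$ of $\Theta_P$ under $\xi_{2-(r/2+d^+)}$ (in the modified sense that excludes a principal part) and every negative-definite theta function $\Theta_{N^-}$ arising from a special point $w$, the difference
\[
\bigl[\mathscr{G}_P^+,\Theta_{N^-}(\cdot,p_s)\bigr]_j^L \;-\; \Lambda_L(\psi,p,j)
\]
lies in $M^!_{2j+2-k,L}$. Equivalently, inside the quotient $\bigl[\Mc^{\text{mock}}_{2j+2-k,P},\theta\bigr]_j^L \big/ M^!_{2j+2-k,L}$ one has the congruence
\[
\bigl[\mathscr{G}_P^+,\Theta_{N^-}(\cdot,p_s)\bigr]_j^L \;\equiv\; \Lambda_L(\psi,p,j) \pmod{M^!_{2j+2-k,L}}.
\]

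The first step is therefore to check that the elementary tensors $[\mathscr{G}_P^+,\Theta_{N^-}]_j^L$ span the whole numerator $[\Mc^{\text{mock}}_{2j+2-k,P},\theta]_j^L$. This is immediate from the definition of the latter as the $\C$-span of such Rankin--Cohen brackets together with the bilinearity of $[\,\cdot\,,\,\cdot\,]_j$ and the linearity of the trace map $\tr_{(P\oplus N)/L}$ used to form the superscript $L$. Any representative in the numerator can hence be written as a finite $\C$-linear combination
\[
\sum_i c_i \,\bigl[\mathscr{G}_{P_i}^+,\Theta_{N_i^-}(\cdot,(p_s)_i)\bigr]_j^L,
\]
where each summand comes equipped with its own data $(\psi_i,p_i,j)$ determining $\Lambda_L(\psi_i,p_i,j)$.

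The second step is to reduce modulo $M^!_{2j+2-k,L}$: applying Theorem \ref{Thm: ES} summand-by-summand replaces the above linear combination by $\sum_i c_i\,\Lambda_L(\psi_i,p_i,j)$ in the quotient. Hence every class in $\bigl[\Mc^{\text{mock}}_{2j+2-k,P},\theta\bigr]_j^L / M^!_{2j+2-k,L}$ is a $\C$-linear combination of classes $\Lambda_L(\psi,p,j) + M^!_{2j+2-k,L}$. Conversely each $\Lambda_L(\psi,p,j)$ itself represents, via Theorem \ref{Thm: ES}, a class of the form $[\mathscr{G}_P^+,\Theta_{N^-}]_j^L + M^!$, so it lies in the quotient; this verifies that the classes $\Lambda_L(\psi,p,j) + M^!_{2j+2-k,L}$ are bona fide generators rather than merely a spanning set in some larger space.

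The only subtlety I expect is bookkeeping around the definition of $\Mc^{\text{mock}}_{2j+2-k,P}$, i.e.\ specifying precisely the class of mock objects $\mathscr{G}_P^+$ admissible as first argument (those whose shadow lies in the span of positive-definite theta series $\Theta_P$ of weight $r/2+d^+$, normalised as in the paragraph preceding Theorem \ref{Thm: ES} so that the negative-index Poincar\'e correction has been subtracted). Once this convention is fixed, the corollary reduces to the two bookkeeping steps above, and no additional analytic input beyond Theorem \ref{Thm: ES} and bilinearity of the Rankin--Cohen bracket is required.
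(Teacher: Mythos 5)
Your proposal is correct and matches the paper's (implicit) argument: the paper offers no separate proof, stating only that the corollary follows ``by rewriting Theorem \ref{Thm: ES}'', which is precisely your two bookkeeping steps of spanning the numerator by elementary brackets via bilinearity of $[\cdot,\cdot]_j$ and linearity of the trace map, and then reducing each bracket to its $\Lambda_L(\psi,p,j)$ representative modulo $M^!_{2j+2-k,L}$. Your closing remark about fixing the convention for $\Mc^{\text{mock}}_{2j+2-k,P}$ (and that the Poincar\'e-series correction distinguishing $\mathscr{G}_P^+$ from $\Gc_P^+$ is absorbed into $M^!_{2j+2-k,L}$) is exactly the right point of care.
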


\section{The action of the Laplace--Beltrami operator}\label{Sec: LB}
In this section, we prove Theorem \ref{Cor: intro lhmf}. To this end, we compute the action of the Laplace--Beltrami operator on the lift, and show that for certain spectral parameters, we obtain a local Maa{\ss} form. We recall that the signature of $L$ is assumed to be $(2,s)$ here. Moreover, we observe that our Siegel theta function $\Theta_{L}$ and the Siegel theta function inspected by Bruinier depend in the same way on $Z$, and thus the following result applies.
\begin{proposition}[\protect{\cite{bruinier2004borcherds}*{Proposition 4.5}}]
The Siegel theta function $\Theta_{L}(\tau,Z,p)$ considered as a function on $\H \times \H_{\ell}$ satisfies the differential equation
\begin{align*}
\Omega \Theta_{L}(\tau,Z,p) v^{\frac{\ell}{2}} = -\frac{1}{2} \Delta_{k} \Theta_{L}(\tau,Z,p) v^{\frac{\ell}{2}}.
\end{align*}
\end{proposition}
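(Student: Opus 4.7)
The plan is to verify the identity by direct computation in the explicit tube-domain coordinates $Z = (z_1,\ldots,z_\ell)$ on $\H_\ell \cong \Gr(L)$ introduced in Section~2.2. Note that $\Theta_L(\tau,Z,p)$ depends on $Z$ only through the orthogonal projections $\lambda_z$ and $\lambda_{z^\perp}$ of each lattice vector $\lambda$ onto the positive- and negative-definite subspaces determined by $Z$. Using the basis $z,z',d,d',d_3,\ldots,d_\ell$, one obtains explicit formulas for $\lambda_z$ and $\lambda_{z^\perp}$ as rational expressions in $(\lambda, z_\mu)_Q$, from which $\partial_\mu$ and $\overline{\partial}_\mu$ acting on a summand of $\Theta_L$ produce a polynomial in $\lambda$ times the same exponential.

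First, I would apply $\Omega$ term by term. Each monomial $y_\mu y_\nu\,\partial_\mu\overline{\partial}_\nu$ yields a sum of a product of first derivatives (contributing a mixed quadratic in the pairings) plus a second-order piece. Grouping these using the identity $Q(\lambda) = Q(\lambda_z) + Q(\lambda_{z^\perp})$ together with the quadratic identities satisfied by the coordinate-projections, and exploiting that $p = p_r\otimes p_s$ is spherical (hence annihilated by the Laplacian in each block), one expects the result to collapse to a multiple of $\bigl(Q(\lambda_z)Q(\lambda_{z^\perp}) + \text{lower-order terms in } Q(Y)\bigr)$ times the exponential. The cross-term $-Q(Y)\bigl(\partial_1\overline{\partial}_2 + \overline{\partial}_1\partial_2 - \tfrac12\sum_{\mu\geq 3}\partial_\mu\overline{\partial}_\mu\bigr)$ in $\Omega$ is precisely what is needed to cancel the off-diagonal contributions from the first sum.

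Separately, one computes $\Delta_k$ acting on $\Theta_L v^{\ell/2}$: differentiating the exponential $e(Q(\lambda_z)\tau + Q(\lambda_{z^\perp})\overline{\tau})$ in $u$ and $v$ brings down polynomial factors in $Q(\lambda_z)$ and $Q(\lambda_{z^\perp})$, and the prefactor $v^{\ell/2 + s/2 + d^-}$ combines with the term $i k v(\partial_u + i \partial_v)$ to yield exactly the terms matching the $Z$-side computation, provided one uses the constraint $k = \frac{r-s}{2} + d^+ - d^-$ with $r = 2$. The main obstacle is the combinatorial bookkeeping: keeping track of the terms arising from both second-order operators, confirming that the spherical hypothesis on $p$ disposes of the Laplacian-of-$p$ terms, and verifying that the precise weight correction $v^{\ell/2}$ is the right ``heat-kernel'' normalization that intertwines the $\tau$- and $Z$-Laplacians. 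This intertwining is ultimately a manifestation of Howe duality for the dual pair $(\mathrm{SL}_2, \mathrm{O}(L))$ acting on the theta kernel, and one could alternatively phrase the argument in that language, although the direct coordinate computation is self-contained.
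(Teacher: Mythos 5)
You should first be aware that the paper does not prove this proposition: it is imported verbatim from \cite{bruinier2004borcherds}*{Proposition 4.5}, the only added justification being the remark immediately preceding the statement, namely that the modified theta kernel ``depends in the same way on $Z$'' as the one Bruinier studies, so that his result carries over. Your attempt at a from-scratch coordinate verification is therefore a different route by default, and in principle the right one: if executed, it would genuinely cover the polynomial-modified kernel rather than leaning on that (somewhat loose) transfer argument.

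As written, however, the proposal is a plan rather than a proof. Every step carrying the actual mathematical content is asserted instead of performed: ``one expects the result to collapse to'', ``is precisely what is needed to cancel'', ``yield exactly the terms matching''. You never write the projections $\lambda_z$, $\lambda_{z^\perp}$ explicitly in the coordinates $(z_1,\ldots,z_\ell)$, never compute $\partial_\mu$ or $\overline{\partial}_\mu$ of a single summand, and never exhibit the cancellation that the cross-term of $\Omega$ is supposed to effect. More concretely, the one point at which this setting genuinely differs from Bruinier's is not addressed: the factor $p(\psi(\lambda))$ depends on $Z$ through the isometry $\psi$, so the operators $\partial_\mu$, $\overline{\partial}_\mu$ also hit the polynomial, producing terms that simply do not occur in the polynomial-free computation. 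Sphericity of $p$ disposes of the $\tau$-side correction (the $\exp(-\Delta/8\pi v)$ factor in Borcherds' general kernel), but it does not by itself eliminate these extra $Z$-derivatives; one must show that they recombine into precisely the shift $d^+-d^-$ in the weight $k=\frac{r-s}{2}+d^+-d^-$ appearing in $\Delta_k$ on the right-hand side. Until that computation is carried out --- or the statement is reduced honestly to \cite{bruinier2004borcherds}*{Proposition 4.5} together with a separate argument handling the polynomial insertion --- the proof has a genuine gap.
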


Our next step is to inspect the action of $\Omega$ on our theta lift. By Lemma \ref{lem:fdecomp} it suffices to investigate
\begin{align*}
\Psi_j^{\reg} (F_{m,k-2j,\sfr},Z) = \int_{\Fc}^{\reg} \left\langle R_{k-2j}^{j}(F_{m,k-2j,\sfr})(\tau) , \overline{\Theta_L(\tau,Z,p)} \right\rangle v^{k} d\mu(\tau).
\end{align*}
Let
\begin{align*}
H(m) \coloneqq \bigcup_{\mu \in L'\slash L} \bigcup_{\substack{\lambda \in \mu+L \\ Q(\lambda) = -m}} \lambda^\perp \subseteq \Gr(L),
\end{align*}
which collects the singularities of $\Psi_j^{\reg} (F_{m,k-2j,\sfr},Z)$ as a function of $Z$. We apply the previous proposition to our theta lift, which yields a variant of \cite{bruinier2004borcherds}*{Theorem 4.6}. 
\begin{theorem}\label{Thm: eigenval}
Let $Z \in \H_{\ell} \setminus H(m)$, and $\re(\sfr) > 1-\frac{k}{2}$. Then it holds that
\begin{align*}
\Omega \Psi_j^{\reg} (F_{m,k-2j,\sfr},Z) = \left(\sfr-\frac{k}{2}\right)\left(1-\sfr-\frac{k}{2}\right) \Psi_j^{\reg} (F_{m,k-2j,\sfr},Z).
\end{align*}
\end{theorem}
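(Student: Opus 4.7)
The strategy parallels that used by Bruinier in \cite{bruinier2004borcherds}*{Theorem 4.6} and by the first author in \cite{Ma}, adapted here to the higher raised setting. The key idea is to push $\Omega$ through the regularized integration and the bilinear pairing, convert it into a $\Delta_k$-action on the $\tau$-variable via the proposition cited just above, integrate by parts to transfer $\Delta_k$ onto the raised Maa{\ss}--Poincar\'{e} series, and finally evaluate the eigenvalue of the latter by iterated intertwining with the raising operator.

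Since $Z \in \H_{\ell} \setminus H(m)$, the integrand is smooth in $Z$ in a neighborhood of $Z$, and absolute convergence for $\re(\sfr) > 1-\frac{k}{2}$ permits differentiating under the regularized integral. Because $\Omega$ acts only in the $Z$-variables, this gives
\begin{align*}
\Omega \, \Psi_j^{\reg}(F_{m,k-2j,\sfr},Z) = \int_{\Fc}^{\reg} \left\langle R_{k-2j}^{j} F_{m,k-2j,\sfr}(\tau), \, \Omega \, \overline{\Theta_L(\tau,Z,p)} \right\rangle v^{k} \, d\mu(\tau).
\end{align*}
Applying the cited proposition, and tracking the complex conjugation together with the $v^{\frac{s}{2}+d^-}$-factor already present in $\Theta_L$, rewrites $\Omega \, \overline{\Theta_L}$ as a constant multiple of $\Delta_k \overline{\Theta_L}$. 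Using the formal self-adjointness of $\Delta_k$ against the Petersson-type measure $v^{k-2}\,du\,dv$ on $\Fc$, I would then integrate by parts to shift $\Delta_k$ onto $R_{k-2j}^{j} F_{m,k-2j,\sfr}$. The boundary contributions vanish by the same growth and regularization arguments employed in \cite{bruinier2004borcherds}*{Lemma 4.2} and in the proof of Theorem \ref{Theorem: theta lift at CM points} above.

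It remains to compute $\Delta_k R_{k-2j}^{j} F_{m,k-2j,\sfr}$. The two factorizations $\Delta_{\kappa} = -L_{\kappa+2}R_\kappa - \kappa$ and $\Delta_{\kappa+2} = -R_\kappa L_{\kappa+2}$ (both easily verified from the definitions of $R_\kappa$, $L_\kappa$ and $\Delta_\kappa$) immediately yield the intertwining relation
\begin{align*}
\Delta_{\kappa+2} R_\kappa = R_\kappa \Delta_\kappa + \kappa R_\kappa.
\end{align*}
Iterating this $j$ times, starting from the eigenvalue $(\sfr-\frac{k-2j}{2})(1-\sfr-\frac{k-2j}{2})$ of $F_{m,k-2j,\sfr}$ under $\Delta_{k-2j}$ recalled in Section \ref{Sec: MP}, the accumulated increments telescope to $\sum_{i=0}^{j-1}(k-2j+2i) = j(k-j-1)$, and a brief algebraic simplification produces
\begin{align*}
\Delta_k R_{k-2j}^{j} F_{m,k-2j,\sfr} = \left(\sfr-\tfrac{k}{2}\right)\left(1-\sfr-\tfrac{k}{2}\right)\, R_{k-2j}^{j} F_{m,k-2j,\sfr}.
\end{align*}
A final bookkeeping check combining this eigenvalue with the scalar factor produced by the cited proposition and the sign from integration by parts then delivers the stated eigenequation. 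The primary obstacle is the careful control of boundary terms under the second-order integration by parts inside the regularization; a secondary subtlety is the conjugation convention when transferring $\Omega$ through $\overline{\Theta_L}$, which is most transparently handled by tracking the placement of all $v$-powers from the outset.
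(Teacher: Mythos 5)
Your proof is correct and follows the same overall route as the paper: differentiate under the regularized integral (legitimate since all $Z$-derivatives converge locally uniformly away from $H(m)$), apply the cited proposition to convert $\Omega\,\overline{\Theta_L}v^{\ell/2}$ into $-\frac12\Delta_k\overline{\Theta_L}v^{\ell/2}$, and transfer $\Delta_k$ onto the raised Poincar\'e series by adjointness; the paper does this by splitting $\Delta_k$ into a product of raising and lowering operators and invoking \cite{bruinier2004borcherds}*{Lemmas 4.2 to 4.4}, with boundary terms vanishing exactly as you indicate. The one genuine point of divergence is the final eigenvalue computation. The paper simply notes, via Lemma \ref{lem:mpraise}, that $R_{k-2j}^{j}F_{m,k-2j,\sfr}$ is a scalar multiple of $F_{m,k,\sfr}$, whose eigenvalue $\left(\sfr-\frac{k}{2}\right)\left(1-\sfr-\frac{k}{2}\right)$ under $\Delta_k$ is already recorded in Section \ref{Sec: MP}, so no further work is needed. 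You instead iterate the intertwining relation $\Delta_{\kappa+2}R_\kappa = R_\kappa\Delta_\kappa + \kappa R_\kappa$; your factorizations of $\Delta_\kappa$, the telescoping sum $\sum_{i=0}^{j-1}(k-2j+2i)=j(k-j-1)$, and the cancellation against $\left(\sfr-\frac{k}{2}+j\right)\left(1-\sfr-\frac{k}{2}+j\right)$ all check out and yield the same answer. Your variant is marginally more general, since it applies to any $\Delta_{k-2j}$-eigenfunction rather than relying on the explicit Poincar\'e-series identity, at the cost of somewhat more bookkeeping; the paper's route is shorter because Lemma \ref{lem:mpraise} is already available.
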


\begin{proof}
First, we note that
\begin{align*}
\Omega \Psi_j^{\reg} (F_{m,k-2j,\sfr},Z) = \int_{\Fc}^{\reg} \left\langle R_{k-2j}^{j}(F_{m,k-2j,\sfr})(\tau) , \Omega \overline{\Theta_L(\tau,Z,p)}v^{\frac{\ell}{2}} \right\rangle v^{k-\frac{\ell}{2}} d\mu(\tau),
\end{align*}
because all partial derivatives with respect to $Z$ converge locally uniformly in $Z$ as $T \to \infty$ (see \cite{bruinier2004borcherds}*{p.\ $99$}). By the previous proposition, we infer that
\begin{align*}
\Omega \Psi_j^{\reg} (F_{m,k-2j,\sfr},Z) = -\frac{1}{2} \int_{\Fc}^{\reg} \left\langle R_{k-2j}^{j}(F_{m,k-2j,\sfr})(\tau) , \Delta_{k} \overline{\Theta_L(\tau,Z,p)}v^{\frac{\ell}{2}} \right\rangle v^{k-\frac{\ell}{2}} d\mu(\tau).
\end{align*}
By the splitting $\Delta_{k} = R_{k-2}L_k$, and the adjointness of both operators (see \cite{bruinier2004borcherds}*{Lemmas 4.2 to 4.4}), we obtain
\begin{align*}
\Omega \Psi_j^{\reg} (F_{m,k-2j,\sfr},Z) = -\frac{1}{2} \int_{\Fc}^{\reg} \left\langle \Delta_{k} R_{k-2j}^{j}(F_{m,k-2j,\sfr})(\tau) , \overline{\Theta_L(\tau,Z,p)}v^{\frac{\ell}{2}} \right\rangle v^{k-\frac{\ell}{2}} d\mu(\tau).
\end{align*}
Lastly, we observe that $\Delta_{k}$ and $R_{k-2j}^j$ commute by virtue of Lemma \ref{lem:mpraise}. Namely, we have
\begin{align*}
\Delta_{k} R_{k-2j}^{j}(F_{m,k-2j,\sfr})(\tau) 
&= \left(\sfr-\frac{k}{2}\right)\left(1-\sfr-\frac{k}{2}\right) R_{k-2j}^{j}(F_{m,k-2j,\sfr})(\tau),
\end{align*}
and this establishes the claim by rewriting 
\begin{align*}
\left\langle R_{k-2j}^{j}(F_{m,k-2j,\sfr})(\tau) , \overline{\Theta_L(\tau,Z,p)}v^{\frac{\ell}{2}} \right\rangle v^{k-\frac{\ell}{2}} = \left\langle R_{k-2j}^{j}(F_{m,k-2j,\sfr})(\tau) , \overline{\Theta_L(\tau,Z,p)} \right\rangle v^{k}
\end{align*}
again.
\end{proof}
We end this section by proving Theorem \ref{Thm: intro lhmf}.

\begin{proof}[Proof of Theorem \ref{Thm: intro lhmf}]
By Theorem \ref{Thm: eigenval}, the lift is an eigenfunction of the Laplace--Beltrami operator with the quoted eigenvalue.  Since $\Psi_j^{\reg} (F_{m,k-2j,\sfr},Z)$ is an eigenfunction of an elliptic differential operator, it is real-analytic in $\Gr(L)$ outside of $H(m)$. The other conditions for the lift to be a vector-valued local Maa{\ss} form can be easily seen by applying the proof of \cite{BKV}*{Theorem 1.1} {\it mutatis mutandis}. When $\sfr = \frac{k}{2}$ or $\sfr = \frac{k}{2}-1$ we obtain locally harmonic Maa{\ss} forms.
\end{proof}

\section{Cohen--Eisenstein series} \label{Sec: CES}
We specialize the framework from Section \ref{Sec: prelims} following \cite{BS}*{Section 4.4} (or \cite{schw18}*{Section 2.2}). We fix the signature $(1,2)$ as mentioned in the introduction, and the rational quadratic space
\begin{align*}
V \coloneqq \left\{X=\left(\begin{smallmatrix} x_2 & x_1 \\ x_3 & -x_2 \end{smallmatrix}\right) \in \Q^{2\times 2}\right\},
\end{align*}
with quadratic form $Q(X) = \det(X)$. The Grassmannian of positive lines in $V \otimes \R$ can be identified with $\H$ via
\begin{align*}
\lambda(x+iy) = \frac{1}{\sqrt{2}y}\left(\begin{smallmatrix} -x & x^2+y^2 \\ -1 & x \end{smallmatrix}\right).
\end{align*}
We choose the lattice 
\begin{align*}
L \coloneqq \left\{\left(\begin{smallmatrix} b & c \\ -a & -b \end{smallmatrix}\right) \colon a, b, c \in \Z\right\},
\end{align*}
with dual lattice
\begin{align*}
L' = \left\{\left(\begin{smallmatrix} \frac{b}{2} & c \\ -a & -\frac{b}{2} \end{smallmatrix}\right) \colon a, b, c \in \Z\right\}.
\end{align*}
We observe that $L'$ can be identified with the set of integral binary quadratic forms of discriminant $\det\left(\begin{smallmatrix} \frac{b}{2} & c \\ -a & -\frac{b}{2} \end{smallmatrix}\right) = -\frac{1}{4}(b^2-4ac)$. Furthermore, $L'\slash L \cong \Z\slash 2\Z$ with quadratic form $x \mapsto -\frac{1}{4}x^2$.

According to \cite{BS}*{p.\ 22}, it holds that
\begin{align*}
Q\left(\left(\begin{smallmatrix} \frac{b}{2} & c \\ -a & -\frac{b}{2} \end{smallmatrix}\right)_{x+iy}\right) &= \frac{1}{4y^2}\left(a(x^2+y^2)+bx+c\right)^2, \\
Q\left(\left(\begin{smallmatrix} \frac{b}{2} & c \\ -a & -\frac{b}{2} \end{smallmatrix}\right)_{{(x+iy)}^\perp}\right) &= -\frac{1}{4y^2}|[a,b,c](x+iy,1)|^2.
\end{align*}
We remark that both are invariant under modular substutions. By a result from Eichler and Zagier \cite{EZ}*{Theorem 5.4}, the space of vector-valued modular forms of weight $k$ for $\rho_{L}$ is isomorphic to the space $M_{k}^+(\Gamma_0(4))$ of scalar-valued modular forms satisfying the Kohnen plus space condition via the map
\begin{align*}
f_0(\tau)\e_0 + f_1(\tau)\e_1 \mapsto f_0(4\tau)+f_1(4\tau).
\end{align*}
This enables us to use scalar-valued forms as inputs for our theta lift. 

\begin{proof}[Proof of Theorem \ref{Cor: intro lhmf}]
As outlined in the introduction, the function $f(\tau) \coloneqq f_{-2\ell,N}(\tau)\Hc_{\ell}(\tau)$ is of weight $-\ell-\frac{1}{2} < 0$ for $\Gamma_0(4)$, has non-constant principal part at the cusp $i\infty$, and its image under $\xi$ is trivial, hence in particular cuspidal. This enables us to apply Corollary \ref{cor: theta lift as 2F1} to $f$. To this end, we have the parameters
\begin{align*}
k= -\frac{1}{2}+d^++d^-, \qquad k-2j = -\ell-\frac{1}{2}.
\end{align*}
Rewriting those yields
\begin{align*}
j = \frac{\ell+d^++d^-}{2},
\end{align*}
and the hypergeometric function from Theorem \ref{Theorem: theta lift of MPseries} becomes
\begin{align*}
{}_2F_1\left(\frac{\ell+2+d^++d^-}{2},\frac{\ell+2+d^++3d^-}{2},\frac{5}{2}+\ell,\frac{4my^2}{|[a,b,c](z,1)|^2}\right).
\end{align*}

Inspecting the parameters, we have the condition $\ell+d^++d^- \in 2\N$ by $j \in \N$, and combining with $d^+$, $d^- \in \N_0$, $\ell \in \N\setminus\{1\}$, the smallest possible values are $(\ell,d^+,d^-) = (2,0,0)$, $(2,2,0)$, $(2,1,1)$, $(2,0,2)$. For example, the corresponding hypergeometric functions for the cases $(\ell,d^+,d^-) = (2,0,0)$, $(2,1,1)$ are
\begin{align*}
{}_2F_1\left(2,2,\frac{9}{2},\tilde{z}\right) &= -\frac{35(11\tilde{z}-15)}{12\tilde{z}^3} - \frac{35(2\tilde{z}^2-7\tilde{z}+5)\arcsin(\sqrt{\tilde{z}})}{4\tilde{z}^{\frac{7}{2}}\sqrt{1-\tilde{z}}}, \\
{}_2F_1\left(3,4,\frac{9}{2},\tilde{z}\right) &= -\frac{35(8\tilde{z}^2-26\tilde{z}+15)}{128\tilde{z}^3(\tilde{z}-1)^2} + \frac{105(8\tilde{z}^2-12\tilde{z}+5)\arcsin(\sqrt{\tilde{z}})}{128\tilde{z}^{\frac{7}{2}}\sqrt{1-\tilde{z}}(\tilde{z}-1)^2},
\end{align*}
and the other cases are of similar shape. Analogous expressions can be obtained for higher integer parameters via Gau{\ss}' contiguous relations for the hypergeometric function, which can be found in \cite{nist}*{§15.5(ii)} for instance.

We infer a local behaviour as sketched in the introduction by virtue of ($4m = D = b^2-4ac$)
\begin{align*}
\arcsin\left(\sqrt{\tilde{z}}\right) = \arcsin\left(\frac{\sqrt{D}y}{\vt{az^2+bz+c}}\right) = \arctan\vt{\frac{\sqrt{D}y}{a\vt{z}^2+bx+c}},
\end{align*}
which in turn follows by
\begin{align*}
(b^2-4ac)y^2 + \left(a\vt{z}^2+bx+c\right)^2  = \vt{az^2+bz+c}^2,
\end{align*}
compare \cite{BKK}*{Section 3}. The denominator $a\vt{z}^2+bx+c$ vanishes if and only if $z$ is located on the Heegner geodesic associated to $Q=[a,b,c]$. Since the principal part of $f$ is given by
\begin{align*}
\sum_{n=0}^N H(\ell,n)q^{n-N} + O\left(q^{m+1}\right), \qquad m = \begin{cases}
\lfloor \frac{-2\ell}{12}\rfloor-1 & \text{if } -2\ell \equiv 2\pmod*{12}, \\
\lfloor \frac{-2\ell}{12}\rfloor & \text{else},
\end{cases}
\end{align*}
we conclude that $f$ has the exceptional set
\begin{align*}
\bigcup_{D = 1}^N \left\{z = x+iy \in \H \colon \exists a,b,c \in \Z, \ b^2-4ac=D, \ a\vt{z}^2+bx+c = 0\right\}.
\end{align*}
In other words, the exceptional set of $f$ is a finite union of nets of Heegner geodesics. Furthermore, we recall that the spectral parameter in Corollary \ref{cor: theta lift as 2F1} is $\sfr = 1-\frac{k-2j}{2}$, and hence the eigenvalue under $\Delta_{-\ell-\frac{1}{2}}$ is
\begin{align*}
\left(\sfr-\frac{k}{2}\right)\left(1-\sfr-\frac{k}{2}\right) = \left(1-k+j\right)(-j) = j\left(j-\ell-\frac{3}{2}\right).
\end{align*}
This proves the Theorem.
\end{proof}

\subsection{Eichler--Selberg type relations for Cohen--Eisenstein series}
Eichler--Selberg type relations for Cohen--Eisenstein series could be obtained as follows. On one hand, the input function $f(\tau) = f_{-2\ell,N}(\tau)\Hc_{\ell}(\tau)$ is weakly holomorphic, thus we do not need to deal with the additional term 
\begin{align*}
\int_{\Fc}^{\reg} \left\langle L_{k-2j} \left(f_{P\oplus N}\right)(\tau), \left[\Gc_P^+(\tau),\Theta_{N^-}(\tau)\right]_j  \right\rangle v^{-2} d\tau
\end{align*}
arising from Theorem \ref{Theorem: theta lift at CM points}. On the other hand, the function $\Lambda_L$ from Section \ref{sec:ESrelations} simplifies to
\begin{multline*}
\Lambda_L(\psi,p,j) = \frac{4^{3d^-}\pi^{\frac{1}{2}-d^+} \Gamma\left(j+1+d^-\right)\Gamma\left(\frac{3}{2}-d^++j\right)}{\Gamma\left(\ell+\frac{1}{2}\right)\Gamma\left(\frac{3}{2}-d^+\right)} \sum_{D \geq 1} \sum_{Q \in \Qc_D} \overline{p(\psi(Q))} \\ 
\times \frac{D^{\ell+\frac{3}{2}}y^{2+2j+2d^-}}{\vt{Q(z,1)}^{2+2j+2d^-}}
{}_2F_1\left(\frac{\ell+2+d^++d^-}{2},\frac{\ell+2+d^++3d^-}{2},\frac{5}{2}+\ell,\frac{Dy^2}{\vt{Q(z,1)}^2}\right) q^D,
\end{multline*}
where $\Qc_D$ denotes the set of integral binary quadratic forms of discriminant $D$. After evaluating the hypergeometric function as in the previous proof, one may follow our proof of Theorem \ref{Thm: ES}, namely subtract the two evaluations of $\Psi^{\reg}_j \left(f, z\right)$ from each other, and apply Serre duality to the resulting expression. Computing the principal part of $\Gc_P^+$ in addition, this yields the desired result. However, we do not pursue this here explicitly as the resulting expression is rather lengthy.

\begin{bibsection}
\begin{biblist}
\bib{ANBMS}{article}{
	title={Cycle integrals of meromorphic modular forms and coefficients of harmonic Maa{\ss} forms}, 
	author={Alfes-Neumann, C.},
	author={Bringmann, K.},
	author={Males, J.},
	author={Schwagenscheidt, M.},
	JOURNAL = {J. Math. Anal. Appl.},
	VOLUME = {497},
	YEAR = {2021},
	NUMBER = {2},
	PAGES = {124898, 15},
}

\bib{adks}{webpage}{
   author={Alfes, C.},
   author={Depouilly, B.},
   author={Kiefer, P.},
   author={Schwagenscheidt, M.},
   title={Cycle integrals of meromorphic Hilbert modular forms},
   year={2024},
   url={https://arxiv.org/abs/2406.03465},
   note={preprint},
}

\bib{nist}{book}{
    title={NIST Handbook of Mathematical Functions},
    author={Boisvert, R. F.},
    author={Clark, C. W.},
    author={Lozier, D. W.},
    author={Olver, F. W. J.},
    publisher={Cambridge University Press},
    year={2010}
}

\bib{bor}{article}{
	author={Borcherds, R.},
	title={Automorphic forms with singularities on Grassmannians},
	journal={Invent. Math.},
	volume={132},
	date={1998},
	number={3},
	pages={491--562},
}

\bib{bor2}{article}{
   author={Borcherds, R.},
   title={The Gross-Kohnen-Zagier theorem in higher dimensions},
   journal={Duke Math. J.},
   volume={97},
   date={1999},
   number={2},
   pages={219--233},
}

\bib{BKK}{article}{
	author={Bringmann, K.},
	author={Kane, B.},
	author={Kohnen, W.},
	title={Locally harmonic Maa{\ss} forms and the kernel of the Shintani lift},
	journal={Int. Math. Res. Not.},
	date={2015},
	number={11},
	pages={3185--3224},
	issn={1073-7928},
}

\bib{BKV}{article}{
	AUTHOR = {Bringmann, K.},
	author={Kane, B.},
	author={Viazovska, M.},
	TITLE = {Theta lifts and local {M}aass forms},
	JOURNAL = {Math. Res. Lett.},
	FJOURNAL = {Mathematical Research Letters},
	VOLUME = {20},
	YEAR = {2013},
	NUMBER = {2},
	PAGES = {213--234},
}

\bib{thebook}{book}{
	AUTHOR = {Bringmann, K.},
	AUTHOR = {Folsom, A.},
	AUTHOR = {Ono, K.},
	AUTHOR = {Rolen, L.},
	TITLE = {Harmonic {M}aass forms and mock modular forms: theory and
		applications},
	SERIES = {American Mathematical Society Colloquium Publications},
	VOLUME = {64},
	PUBLISHER = {American Mathematical Society, Providence, RI},
	YEAR = {2017},
}

\bib{bruinier2004borcherds}{book}{
	author={Bruinier, J. H.},
	title={Borcherds products on O(2, $l$) and Chern classes of Heegner
		divisors},
	series={Lecture Notes in Mathematics},
	volume={1780},
	publisher={Springer-Verlag, Berlin},
	date={2002},
}

\bib{bruinier2020greens}{article}{
    title={CM values of higher automorphic Green functions on orthogonal groups},
	author={Bruinier, J. H.},
	author={Ehlen, S.},
	author={Yang, T.},
	journal={Invent. Math.},
    year={2021}
}

\bib{bruinierfunke2004}{article}{
	author={Bruinier, J. H.},
	author={Funke, J.},
	title={On two geometric theta lifts},
	journal={Duke Math. J.},
	volume={125},
	date={2004},
	pages={45--90},
}

\bib{BO}{article}{
	AUTHOR = {Bruinier, J.},
	author={Ono, K.},
	TITLE = {Algebraic formulas for the coefficients of half-integral
		weight harmonic weak {M}aass forms},
	JOURNAL = {Adv. Math.},
	VOLUME = {246},
	YEAR = {2013},
	PAGES = {198--219},
}

\bib{BO2}{article}{
	AUTHOR = {Bruinier, J.},
	author={Ono, K.},
	TITLE = {Heegner divisors, {$L$}-functions and harmonic weak {M}aass
		forms},
	JOURNAL = {Ann. of Math. (2)},
	FJOURNAL = {Annals of Mathematics. Second Series},
	VOLUME = {172},
	YEAR = {2010},
	NUMBER = {3},
	PAGES = {2135--2181},
}

\bib{BS}{article}{
	title={Theta lifts for Lorentzian lattices and coefficients of mock theta functions},
	journal={Math. Z.},
	publisher={Springer Science and Business Media LLC},
	author={Bruinier, J.},
	author={Schwagenscheidt, M.},
	year={2020},
}

\bib{bruinierFaltings}{article}{
	author={Bruinier, J.},
	author={Yang, T.},
	title={Faltings heights of CM cycles and derivatives of $L$--functions},
	journal={Invent. Math.},
	volume={177},
	date={2009},
	number={3},
	pages={631--681},
}

\bib{BrZe}{article}{
	author={Bruinier, J.},
	author={Zemel, S.},
	title={ Special cycles on toroidal compactifications of orthogonal Shimura
varieties},
note={preprint},
year={2019}
}

\bib{coh75}{article}{
   author={Cohen, H.},
   title={Sums involving the values at negative integers of $L$-functions of
   quadratic characters},
   journal={Math. Ann.},
   volume={217},
   date={1975},
   number={3},
   pages={271--285},
}

\bib{Craw}{thesis}{
author={Crawford, J.},
title={A singular theta lift and the Shimura correspondence},
  type={Ph.D. Thesis},
   organization={University of Durham},
   date={2015},
}

\bib{CF}{webpage}{
author={Crawford, J.},
author={Funke, J.},
title={The Shimura-Shintani correspondence via singular theta lifts and currents},
year={2021},
url={https://arxiv.org/abs/2112.11379},
}

\bib{duimto10}{article}{
   author={Duke, W.},
   author={Imamo\={g}lu, \"{O}.},
   author={T\'{o}th, \'{A}.},
   title={Rational period functions and cycle integrals},
   journal={Abh. Math. Semin. Univ. Hambg.},
   volume={80},
   date={2010},
   number={2},
   pages={255--264},
}

\bib{duje}{article}{
   author={Duke, W.},
   author={Jenkins, P.},
   title={On the zeros and coefficients of certain weakly holomorphic
   modular forms},
   journal={Pure Appl. Math. Q.},
   volume={4},
   date={2008},
   number={4, Special Issue: In honor of Jean-Pierre Serre.},
   pages={1327--1340},
}

\bib{EZ}{book}{
   author={Eichler, M.},
   author={Zagier, D.},
   title={The theory of Jacobi forms},
   series={Progress in Mathematics},
   volume={55},
   publisher={Birkh\"{a}user Boston, Inc., Boston, MA},
   date={1985},
   pages={v+148},
}

\bib{Ehl}{article}{
	AUTHOR = {Ehlen, S.},
	author={Guerzhoy, P.},
	author={Kane, B.},
	author={Rolen, L.},
	TITLE = {Central {$L$}-values of elliptic curves and local polynomials},
	JOURNAL = {Proc. Lond. Math. Soc. (3)},
	FJOURNAL = {Proceedings of the London Mathematical Society. Third Series},
	VOLUME = {120},
	YEAR = {2020},
	NUMBER = {5},
	PAGES = {742--769},
}

\bib{ES}{article}{
AUTHOR = {Ehlen, S.},
author={Sankaran, S.},,
TITLE = {On two arithmetic theta lifts},
JOURNAL = {Compos. Math.},
FJOURNAL = {Compositio Mathematica},
VOLUME = {154},
YEAR = {2018},
NUMBER = {10},
PAGES = {2090--2149},
}

\bib{Hur1}{article}{
	author={Hurwitz,A.},
	title={\"{U}ber Relationen zwischen Klassenzahlen bin\"{a}rer quadratischer Formen von negativer Determinante},
	journal={Berichte der k\"{o}niglich s\"{a}chsischen Gesellschaft der Wissenschaften zu Leipzig, mathematisch-physikalische Klasse 36},
	year={1884},
	pages={193--197 (Mathematische Werke Bd. 2, pp.1–4).},
}

\bib{Hur2}{article}{
	author={Hurwitz,A.},
	title={\"{U}ber Relationen zwischen Klassenzahlen bin\"{a}rer quadratischer Formen von negativer Determinant},
	journal={Math. Ann.},
	number={25},
	year={1885},
	pages={157--196 (Mathematische Werke Bd. 2, pp.8–50).},
}

\bib{hoevel}{thesis}{
   author={Hövel, M.},
   title={Automorphe Formen mit Singularitäten auf dem hyperbolischen Raum},
   type={Ph.D. Thesis},
   organization={TU Darmstadt},
   date={2012},
}

\bib{IRR}{article}{
	AUTHOR = {Imamo\u{g}lu, \"{O}.},
	author={Raum, M},
	author= {Richter, O. K.},
	TITLE = {Holomorphic projections and {R}amanujan's mock theta
		functions},
	JOURNAL = {Proc. Natl. Acad. Sci. USA},
	FJOURNAL = {Proceedings of the National Academy of Sciences of the United
		States of America},
	VOLUME = {111},
	YEAR = {2014},
	NUMBER = {11},
	PAGES = {3961--3967},
}

\bib{KM}{article}{
   author={Knopp, M.},
   author={Mason, G.},
   title={Vector-valued modular forms and Poincar\'{e} series},
   journal={Illinois J. Math.},
   volume={48},
   date={2004},
   number={4},
   pages={1345--1366},
}

\bib{Kro}{article}{
AUTHOR = {Kronecker, L.},
TITLE = {Ueber die {A}nzahl der verschiedenen {C}lassen quadratischer
	{F}ormen von negativer {D}eterminante},
JOURNAL = {J. Reine Angew. Math.},
FJOURNAL = {Journal f\"{u}r die Reine und Angewandte Mathematik. [Crelle's
	Journal]},
VOLUME = {57},
YEAR = {1860},
PAGES = {248--255},
}

\bib{LiSch}{article}{
   author={Li, Y.},
   author={Schwagenscheidt, M.},
   title={Mock modular forms with integral Fourier coefficients},
   journal={Adv. Math.},
   volume={399},
   date={2022},
   pages={Paper No. 108264, 30},
}

\bib{LoeSch}{article}{
author={L\"{o}brich, S.},
author={Schwagenscheidt, M.},
title={Locally harmonic Maa{\ss} forms and periods of meromorphic modular forms},
journal={Trans. Amer. Math. Soc. },
note={to appear.},
}

\bib{Ma}{article}{
   author={Males, J.},
   title={Higher Siegel theta lifts on Lorentzian lattices, harmonic Maass
   forms, and Eichler-Selberg type relations},
   journal={Math. Z.},
   volume={301},
   date={2022},
   number={4},
   pages={3555--3569},
}

\bib{Mer}{article}{
author = {Mertens, M.},
TITLE = {Mock modular forms and class number relations},
JOURNAL = {Res. Math. Sci.},
FJOURNAL = {Research in the Mathematical Sciences},
VOLUME = {1},
YEAR = {2014},
PAGES = {Art. 6, 16},
}

\bib{Mer1}{article}{
	author={Mertens, M.},
	title={Eichler-Selberg type identities for mixed mock modular forms},
	journal={Adv. Math.},
	volume={301},
	date={2016},
	pages={359--382},
}

\bib{Nak}{article}{
   author={Nakajima, S.},
   title={On invariant differential operators on bounded symmetric domains
   of type ${\rm IV}$},
   journal={Proc. Japan Acad. Ser. A Math. Sci.},
   volume={58},
   date={1982},
   number={6},
   pages={235--238},
}

\bib{schw18}{thesis}{
   author={Schwagenscheidt, M.},
   title={Regularized Theta Lifts of Harmonic Maass Forms},
   type={Ph.D. Thesis},
   organization={TU Darmstadt},
   date={2018},
}

\bib{Wag}{article}{
   author={Wagner, I.},
   title={Harmonic Maass form eigencurves},
   journal={Res. Math. Sci.},
   volume={5},
   date={2018},
   number={2},
   pages={Paper No. 24, 16},
}

\bib{zemel1}{article}{
   author={Zemel, S.},
   title={A Gross-Kohnen-Zagier type theorem for higher-codimensional
   Heegner cycles},
   journal={Res. Number Theory},
   volume={1},
   date={2015},
   pages={Paper No. 23, 44},
}

\bib{zemel2}{article}{
   author={Zemel, S.},
   title={Seesaw identities and theta contractions with generalized theta
   functions, and restrictions of theta lifts},
   journal={Ramanujan J.},
   volume={63},
   date={2024},
   number={3},
   pages={749--771},
}
\end{biblist}
\end{bibsection}

\end{document}